\numberwithin{equation}{section}
\newtheorem{thm}{Theorem}[section]
\newtheorem{lem}[thm]{Lemma}
\newtheorem{cor}[thm]{Corollary}
\newtheorem{prop}[thm]{Proposition}
\theoremstyle{definition}
\newtheorem{defn}[thm]{Definition}
\newcommand{\be}{\begin{equation}}
\newcommand{\ee}{\end{equation}}
\newcommand{\ba}{\begin{eqnarray}}
\newcommand{\ea}{\end{eqnarray}}
\newcommand{\bi}{\begin{itemize}}
\newcommand{\ei}{\end{itemize}}
\newcommand{\bn}{\begin{enumerate}}
\newcommand{\en}{\end{enumerate}}
\newcommand{\bbm}{\begin{bmatrix}}
\newcommand{\ebm}{\end{bmatrix}}
\newcommand{\bp}{\begin{proof}}
\newcommand{\ep}{\end{proof}}
\newcommand{\nn}{\nonumber}
\newcommand{\C}{\mathbb{C}}
\newcommand{\T}{\mathbb{T}}
\newcommand{\B}{\mathbb{B}}
\renewcommand{\L}{\mathcal{L}}
\newcommand{\gd}{{G_\delta}}
\newcommand{\cd}{{C_\delta}}
\newcommand{\mc}{\ensuremath{\mathcal}}
\newcommand{\ov}{\ensuremath{\overline}}
\newcommand{\ip}[2]{\ensuremath{\langle {#1} , {#2} \rangle}}
\begin{document}
\title[Extremal multipliers of the DA space]{Extremal multipliers of the Drury-Arveson space}

\author{M.T. Jury}
\address{University of Florida}
\email{mjury@ad.ufl.edu}

\author{R.T.W. Martin}
\address{University of Cape Town}
\email{rtwmartin@gmail.com}

\thanks{Second author acknowledges support of NRF CPRR Grant 90551.}
\begin{abstract}
We give a new characterization of the so-called {\em quasi-extreme} multipliers of the Drury-Arveson space $H^2_d$, and show that every quasi-extreme multiplier is an extreme point of the unit ball of the multiplier algebra of $H^2_d$.
\end{abstract}

\maketitle

\section{Introduction}
In \cite{Jur1} and \cite{JurMar} we introduced the notion of a {\em quasi-extreme} multiplier of the Drury-Arveson space $H^2_d$, and gave a number of equivalent formulations of this property. (The relevant definitions are recalled in Section~\ref{sec:review}.) The main motivation is that in one variable, each of these conditions is equivalent to $b$ being an extreme point of the unit ball of $H^\infty$ (the space of bounded analytic functions in the unit disk $\mathbb D\subset \mathbb C$). The purpose of this paper is to give one further characterization of quasi-extremity in the general case, from which it will follow that every quasi-extreme multiplier of $H^2_d$ is in fact an extreme point of the unit ball of the multiplier algebra $\mathcal{M}(H^2_d)$. (The converse statement, namely whether or not every extreme point is quasi-extreme in our sense, remains open.)  In particular we will prove the following theorem:
\begin{thm}\label{thm-main}
  A contractive mulitplier $b$ of $H^2_d$ is quasi-extreme if and only if the only multiplier $a$ satisfying
  \begin{equation}\label{}
M_a^*M_a +M_b^*M_b\leq I
  \end{equation}
is $a\equiv 0$.
\end{thm}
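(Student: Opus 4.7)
The plan is to reformulate the operator inequality as a statement about column multipliers and then invoke one of the equivalent formulations of quasi-extremity from \cite{Jur1,JurMar} (to be recalled in Section~\ref{sec:review}). The inequality $M_a^*M_a + M_b^*M_b \leq I$ says precisely that the column operator $\binom{M_a}{M_b}\colon H^2_d \to H^2_d \oplus H^2_d$ is a contraction; equivalently, the pair $\binom{a}{b}$ is a contractive $\mathbb C^2$-valued multiplier of $H^2_d$. This reformulation situates the problem within the framework of row/column contractive multipliers, which is the natural home of the previously-established equivalent characterizations of quasi-extreme.

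For the implication ($\Leftarrow$), I would assume $b$ is quasi-extreme and that some contractive multiplier $a$ satisfies the inequality. The inequality yields $\|M_a f\|^2 \leq \|f\|^2 - \|M_b f\|^2$ for every $f \in H^2_d$, so $M_a$ maps $H^2_d$ contractively into the de Branges--Rovnyak space $\mathcal H(b)$ with its standard reproducing-kernel norm, and this map intertwines the Arveson $d$-shift. The equivalent characterization of quasi-extremity---namely, that no nonzero shift-intertwining contraction from $H^2_d$ into $\mathcal H(b)$ exists, or equivalently that a certain defect subspace is trivial---then forces $a \equiv 0$.

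For the converse ($\Rightarrow$), I would assume $b$ is not quasi-extreme and use the failure of the defect-space condition to produce a nonzero shift-intertwining contraction $T\colon H^2_d \to \mathcal H(b)$. Setting $a := T(1)$ and using the intertwining property together with reproducing-kernel identities, one checks that $a$ is a multiplier of $H^2_d$ with $M_a = T$, and that the required operator inequality holds by construction of the $\mathcal H(b)$-norm.

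The main obstacle, and the reason this characterization was not previously apparent, is the bridge between operator-theoretic factorization and multiplier-theoretic existence: Douglas's lemma gives a contraction $X$ with $M_a = (I - M_b^*M_b)^{1/2} X$, but a priori $X$ need not intertwine the Arveson shifts, so the defect-space condition is not directly accessible. The key is to exploit the multiplier structure of $a$---in conjunction with a Leech-type theorem for $\mathrm{Mult}(H^2_d)$ and the complete Nevanlinna--Pick property of the Drury-Arveson kernel---to upgrade $X$ to a shift-intertwiner, at which point quasi-extremity can be invoked. Carefully matching this upgraded factorization with one of the defect-space equivalent formulations of quasi-extremity from \cite{JurMar} is the crux of the argument.
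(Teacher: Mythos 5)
There is a genuine gap, and it appears in both directions of your sketch: you conflate the \emph{column} condition $M_a^*M_a + M_b^*M_b \leq I$ (which is what the theorem actually uses) with the \emph{row} condition $M_aM_a^* + M_bM_b^* \leq I$. Your step ``$M_a$ maps $H^2_d$ contractively into $\mathcal H(b)$ with its standard reproducing-kernel norm'' is equivalent, by the usual kernel-domination criterion, to the positivity of $\left(1 - a(z)\overline{a(w)} - b(z)\overline{b(w)}\right)/(1-\langle z,w\rangle)$, i.e.\ to $(a\ b)$ being a contractive $1\times 2$ multiplier, which is the row inequality $M_aM_a^* + M_bM_b^* \leq I$. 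The column hypothesis gives instead, via Douglas, $M_a = X(I-M_b^*M_b)^{1/2}$ with $\|X\|\leq 1$, and $(I-M_b^*M_b)$ is not the defect operator $(I-M_bM_b^*)$ that defines $\mathcal H(b)$ as a range space. In one variable these blur together because the Pythagorean identity $|a|^2+|b|^2\leq 1$ a.e.\ controls both $M_a^*M_a$ and $M_aM_a^*$ simultaneously; on $H^2_d$ with $d>1$ they are genuinely different and the whole point of the theorem is to work with the column condition. Similarly, the $(\Rightarrow)$ construction you describe (build a contraction $T:H^2_d\to\mathcal H(b)$, set $a=T(1)$, check $M_a=T$) would again produce the row inequality, not the needed column one. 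In addition, the ``no nonzero shift-intertwining contraction from $H^2_d$ into $\mathcal H(b)$'' characterization of quasi-extremity that you invoke is not among the equivalences recalled in Proposition~\ref{prop:qe-conditions}, and since $\mathcal H(b)$ is not invariant under $M_{z_j}$ it is not clear what ``intertwining'' should mean; this would have to be formulated and proved.

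For comparison, the paper's proof takes a different and more hands-on route in each direction. For $(\Rightarrow)$ (Proposition~\ref{prop-a-function}) it builds the canonical contractive Gleason solution $(X_1,\dots,X_d)$ and its admissible tuple $(b_1,\dots,b_d)$ from the Herglotz-space partial isometry $V$, verifies that the enlarged colligation $\widetilde{\mathbf U}$ (with bottom row $(-a_0 b^*, a_0)$, $a_0^2 = 1-|b(0)|^2-\sum\|b_j\|_b^2 > 0$) is isometric via the defect identity~(\ref{eqn:defects}), and reads off $a$ as the transfer function of the subcolligation $\mathbf V$, so that $\binom{b}{a}$ is automatically a contractive column multiplier with $a(0)=a_0\neq 0$. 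For $(\Leftarrow)$ (Proposition~\ref{minorant-implies-notqe}) it first uses free liftings to the noncommutative Toeplitz algebra $\mathcal L_d$ to reduce to $a(0)\neq 0$ (by stripping a $L_v^*$), then forms the $2\times 2$ contractive multiplier $c=\begin{pmatrix}b&0\\a&0\end{pmatrix}$, takes a contractive Gleason solution for $\mathscr H(c)$, and shows via matrix-valued kernel domination that the $(1,1)$ entries $(b_1,\dots,b_d)$ land in $\mathcal H(b)$ and form an admissible tuple with $\sum\|b_j\|_b^2 \leq 1-|b(0)|^2-|a(0)|^2 < 1-|b(0)|^2$, which by item (v) of Proposition~\ref{prop:qe-conditions} says $b$ is not quasi-extreme. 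None of this passes through a contractive embedding $M_a:H^2_d\to\mathcal H(b)$.
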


Since the corollary concerning extreme points follows immediately, we prove it here:

\begin{cor}
  If $b\in ball(\mathcal{M}(H^2_d))$ is quasi-extreme, then $b$ is an extreme point of $ball(\mathcal{M}(H^2_d))$.
\end{cor}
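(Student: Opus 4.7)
The plan is to deduce the corollary directly from Theorem~\ref{thm-main} by means of a parallelogram-type identity for multiplication operators. Suppose $b$ is quasi-extreme and that $b = \tfrac{1}{2}(b_1 + b_2)$ with $b_1, b_2 \in \mathrm{ball}(\mathcal{M}(H^2_d))$. Set $a := \tfrac{1}{2}(b_1 - b_2)$, so that $b_1 = b + a$ and $b_2 = b - a$. The goal is to show that $a \equiv 0$, which will force $b_1 = b_2 = b$.

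The key observation is that $a$ is a multiplier of $H^2_d$ (as a linear combination of two multipliers), and that since $b_1$ and $b_2$ are contractive multipliers we have
\[
M_{b+a}^* M_{b+a} \leq I, \qquad M_{b-a}^* M_{b-a} \leq I.
\]
Adding these two operator inequalities and expanding gives
\[
2 M_b^* M_b + 2 M_a^* M_a = M_{b+a}^* M_{b+a} + M_{b-a}^* M_{b-a} \leq 2I,
\]
so that $M_a^* M_a + M_b^* M_b \leq I$.

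At this point Theorem~\ref{thm-main} applies directly: since $b$ is quasi-extreme, the only multiplier $a$ satisfying this inequality is $a \equiv 0$. Hence $b_1 = b_2 = b$, and $b$ is an extreme point of $\mathrm{ball}(\mathcal{M}(H^2_d))$. There is no real obstacle here; the entire argument is a one-line consequence of Theorem~\ref{thm-main} together with the parallelogram identity for the positive operators $M_{b \pm a}^* M_{b \pm a}$, which is precisely why the authors note that the corollary follows immediately.
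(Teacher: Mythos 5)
Your proof is correct and is essentially identical to the paper's: both reduce the corollary to Theorem~\ref{thm-main} by averaging the inequalities $M_{b\pm a}^*M_{b\pm a}\leq I$ to obtain $M_a^*M_a + M_b^*M_b\leq I$. The only difference is that you argue directly (assuming $b=\tfrac12(b_1+b_2)$ and deducing $a\equiv 0$) whereas the paper states the contrapositive; this is a trivial logical reformulation of the same argument.
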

\begin{proof} We prove the contrapositive.  If $b$ is not extreme, then there exists a nonzero $a\in ball(\mathcal{M}(H^2_d))$ such that both $b\pm a$ lie in $ball(\mathcal{M}(H^2_d))$, that is, are contractive multipliers of $H^2_d$. We then have the operator inequalities
  \begin{equation*}
    M_{b+a}^*M_{b+a} \leq I, \quad M_{b-a}^*M_{b-a} \leq I
  \end{equation*}
averaging these inequalities gives
\begin{equation*}
  M_a^*M_a +M_b^*M_b\leq I
\end{equation*}
so by Theorem~\ref{thm-main}, $b$ is not quasi-extreme.
\end{proof}

The remainder of the paper is devoted to proving Theorem~\ref{thm-main}. Since the techniques required are rather different, the two implications of the theorem will be proved as two separate propositions, Propositions~\ref{prop-a-function} and \ref{minorant-implies-notqe}.  In Section~\ref{sec:review} we recall the Drury-Arveson space and its multipliers, and review the necessary results concerning the de-Branges Rovnyak type spaces $\mathcal H(b)$ conctractively contained in $H^2_d$, and in particular the solutions to the Gleason problem in these spaces. We define the {\em quasi-extreme} multipliers and review some equivalent formulations of this property that will be used later. In Section~\ref{sec:non-qe} we study the non-quasi-extreme multipliers in more detail, and extend to this class of functions some of the results proved by Sarason \cite{Sar} in the one-variable case. Our results rely heavily on the the construction of a particular solution to the Gleason problem with good extremal properties, which is carried out in this section. In Sections~\ref{sec:a-function} and \ref{sec:forward} respectively we prove Propositions~\ref{prop-a-function} and \ref{minorant-implies-notqe}.

\section{The Drury-Arveson space, multipliers, and
  quasi-extremity}\label{sec:review}

The {\em Drury-Arveson space} is the Hilbert space of holomorphic
functions defined on the unit ball $\mathbb B^d\subset \mathbb C^d$
with reproducing kernel
\begin{equation*}
  k_w(z)=k(z,w)=\frac{1}{1-zw^*}; \quad \quad z,w \in \B ^d
\end{equation*}
(Here we use the notation $z=(z_1 , z_2, \dots, z_d)$, so that $zw^* =
\sum_{j=1}^d z_j\overline{w_j}$.)  General facts about the $H^2_d$ spaces may be found in the recent survey \cite{Sha}.

A holomorphic function $b$ on $\mathbb B^d$ will be called a {\em multiplier} if $bf\in H^2_d$ whenever $f\in H^2_d$. In this case the operator $M_b:f\to bf$ is bounded, and we let $\mathcal{M}(H^2_d)$ denote the Banach algebra of multipliers, equipped with the operator norm. (Warning: the multiplier norm always dominates the supremum norm of $b$ over the unit ball, but the two are in general unequal. Also, not every bounded function $b$ is a multiplier, see \emph{e.g.} \cite{Sha,ArvI}.)  For the reproducing kernel $k_w$ we have $M_b^*k_w =b(w)^* k_w$. It follows that $\|M_b\|\leq 1$ if and only if the expression
\begin{equation*}
 k^b_w(z)=k^b(z,w):= \frac{1-b(z)b(w)^* }{1-zw^*},
\end{equation*}
defines a positive kernel on $\mathbb B^d$. When this is the case we let $\mathcal H(b)$ denote the associated reproducing kernel Hilbert space, called the deBranges-Rovnyak space of $b$. The space $\mathcal H(b)$ is a space of holomorphic functions on $\mathbb B^d$, contained in $H^2_d$, and the inclusion map $\mathcal H(b)\subset H^2_d$ is contractive for the respective Hilbert space norms. We write $\|\cdot\|_b$ and $\langle \cdot, \cdot\rangle_b$ for the norm and inner product in $\mathcal H(b)$ respectively.

Properties of the spaces $\mathcal H(b)$ when $d>1$ were studied in \cite{Jur1}, inspired among other things by the results of Sarason in the one variable case \cite{Sar},\cite{Sar-book}. In one variable, the $\mathcal H(b)$ spaces are invariant under the backward shift; in several variables we instead (following Ball, Bolotnikov, and Fang \cite{BBF}) consider solutions to the {\em Gleason problem}: given a function $f\in\mathcal H(b)$, we seek functions $f_1, \dots f_d\in \mathcal H(b)$ such that
\begin{equation}\label{eqn:gleason-prob}
  f(z)-f(0)=\sum_{j=1}^d z_j f_j(z).
\end{equation}
From \cite{BBF} we know that this problem always has a solution; in fact there exist (not necessarily unique) bounded operators $X_1, \dots X_d$ acting on $\mathcal H(b)$ such that the functions $f_j:=X_jf$ solve (\ref{eqn:gleason-prob} for any $f \in \mc{H}(b)$. Moreover these $X_j$ can be chosen to be {\em contractive} in the following sense: for every $f\in\mathcal H(b)$,
\begin{equation}
\sum_{j=1}^d \|X_jf\|^2_b\leq \|f\|_b^2 -|f(0)|^2.
\end{equation}
These contractive solutions were studied further in \cite{Jur1}, where we proved the following (see also \cite{JurMar} for the vector-valued case):

\begin{prop}\label{prop:gleason} A set of bounded operators $(X_1, \dots X_d)$ is a contractive solution to the Gleason problem in $\mathcal H(b)$ if and only if the $X_j$ act on reproducing kernels by the formula
\begin{equation}\label{Xj-def}
  X_jk_w^b = w_j^*k_w^b - b(w)^*b_j
\end{equation}
for some choice of functions $b_1, \dots b_d\in \mathcal H(b)$ which satisfy
\begin{itemize}
\item[(i)] $\sum_{j=1}^d z_j b_j(z) = b(z)-b(0)$,
\item[(ii)] $\sum_{j=1}^d\|b_j\|^2_b \leq 1-|b(0)|^2$.
\end{itemize}

\end{prop}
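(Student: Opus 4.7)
The plan is to prove the two directions of the biconditional separately, viewing (\ref{Xj-def}) as an explicit parameterization of contractive Gleason solutions by admissible tuples $(b_1, \dots, b_d)$ in $\mc{H}(b)$ satisfying (i) and (ii).

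For the $(\Leftarrow)$ direction, given such a tuple, I define $X_j$ on $\mathrm{span}\{k_w^b : w \in \B^d\}$ by the formula (\ref{Xj-def}) and extend linearly. The Gleason equation on reproducing kernels is a direct calculation: summing (\ref{Xj-def}) against $z_j$ and applying the kernel identity $k_w^b(z)(1 - zw^*) = 1 - b(z)b(w)^*$ together with (i) yields $\sum_j z_j (X_j k_w^b)(z) = k_w^b(z) - k_w^b(0)$. For boundedness and contractivity, I take $f = \sum_i c_i k_{w_i}^b$ in the linear span of kernels and expand $\sum_j \|X_j f\|_b^2$; using $\sum_j \overline{w_{i,j}} w_{\ell,j} = w_\ell w_i^*$, the kernel identity, and (i), the computation collapses to the sharp identity
\[
\sum_j \|X_j f\|_b^2 \;=\; \bigl(\|f\|_b^2 - |f(0)|^2\bigr) \;-\; |B|^2 \Bigl(1 - |b(0)|^2 - \sum_j \|b_j\|_b^2\Bigr),
\]
where $B := \sum_i c_i b(w_i)^*$. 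By (ii) the bracketed factor is non-negative, so the subtracted term is non-positive, yielding the contractive bound; $X_j$ then extends to a bounded operator on all of $\mc{H}(b)$.

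For the $(\Rightarrow)$ direction, given a contractive Gleason solution $(X_j)$, I apply the Gleason equation to $f = k_0^b$ to obtain
\[
\sum_j z_j (X_j k_0^b)(z) \;=\; k_0^b(z) - k_0^b(0) \;=\; -b(0)^*\bigl(b(z) - b(0)\bigr).
\]
When $b(0) \ne 0$, I set $b_j := -b(0)^{-*} X_j k_0^b \in \mc{H}(b)$. Condition (i) is then immediate, and (ii) follows by applying contractivity to $k_0^b$: since $\sum_j \|X_j k_0^b\|_b^2 = |b(0)|^2 \sum_j \|b_j\|_b^2$ and $\|k_0^b\|_b^2 - |k_0^b(0)|^2 = |b(0)|^2(1 - |b(0)|^2)$, dividing by $|b(0)|^2$ yields $\sum_j \|b_j\|_b^2 \le 1 - |b(0)|^2$. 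The degenerate case $b(0) = 0$ forces $X_j \cdot 1 = 0$ automatically, so one instead extracts the $b_j$'s from a direct Gleason-type decomposition of $b \in \mc{H}(b)$ as provided by \cite{BBF}.

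The hardest step—and the main obstacle—is to verify that the formula (\ref{Xj-def}) holds at \emph{every} $w \in \B^d$, not just at $w = 0$. Setting $F_j(w) := X_j k_w^b - w_j^* k_w^b + b(w)^* b_j \in \mc{H}(b)$, the Gleason equation gives $\sum_j z_j F_j(w)(z) = 0$ and $F_j(0) = 0$; yet in several variables $\sum_j z_j g_j = 0$ does not force $g_j = 0$, so linearity and the Gleason equation alone are insufficient. My strategy is to invoke the $(\Leftarrow)$ direction to build a parallel contractive solution $\tilde X_j$ from these $b_j$'s via (\ref{Xj-def}), and then pit the sharp contractivity identity derived above (which holds with \emph{equality} for $\tilde X_j$) against the contractivity bound satisfied by $X_j$, exploiting $\sum_j z_j F_j(w) = 0$ to conclude $F_j(w) \equiv 0$ and hence $X_j = \tilde X_j$ on all reproducing kernels.
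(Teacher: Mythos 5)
The paper does not actually prove this proposition; it cites it to \cite{Jur1} and \cite{JurMar}, so there is no proof in the source to compare against. On its own terms, your $(\Leftarrow)$ direction is correct: if you write $f=\sum_i c_i k_{w_i}^b$, $B=\sum_i c_i b(w_i)^*$, $C=\sum_i c_i$, then a direct expansion using $\sum_j \overline{w_{i,j}}w_{\ell,j}=w_\ell w_i^*$, the identity $(1-w_\ell w_i^*)k^b(w_\ell,w_i)=1-b(w_\ell)b(w_i)^*$, condition (i), and $f(0)=C-b(0)B$ does yield the sharp identity
\begin{equation*}
\sum_j\|X_jf\|_b^2=\bigl(\|f\|_b^2-|f(0)|^2\bigr)-|B|^2\Bigl(1-|b(0)|^2-\sum_j\|b_j\|_b^2\Bigr),
\end{equation*}
from which boundedness and contractivity follow on the dense span of kernels. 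Good.

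The $(\Rightarrow)$ direction, however, has a genuine gap, and you have correctly located it yourself. Two problems. First, the fallback when $b(0)=0$: a Gleason decomposition $b(z)-b(0)=\sum z_jb_j(z)$ with $b_j\in\mathcal H(b)$ exists by \cite{BBF} for any contractive multiplier, but there is no reason the tuple obtained that way should have anything to do with the given $X$. Worse, such a decomposition is highly non-unique, so \lq\lq pick one\rq\rq\ cannot recover (\ref{Xj-def}) for the specific $X$ you started with. (Your phrase \lq\lq decomposition of $b\in\mathcal H(b)$\rq\rq\ is also off --- $b\in\mathcal H(b)$ iff $b$ is not quasi-extreme, which you cannot assume.) Second, and more fundamentally, the comparison strategy does not close. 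Set $\epsilon:=1-|b(0)|^2-\sum_j\|b_j\|_b^2\geq 0$ and $D_j:=X_j-\tilde X_j$ on the span of kernels. Your sharp identity for $\tilde X$ combined with contractivity of $X$ gives only
\begin{equation*}
2\operatorname{Re}\sum_j\langle \tilde X_jf,D_jf\rangle_b+\sum_j\|D_jf\|_b^2\;\leq\;|B(f)|^2\,\epsilon.
\end{equation*}
When $\epsilon>0$ and $B(f)\neq 0$ the right-hand side is strictly positive, so this inequality does not force $D_jf=0$, nor does a polarization or scaling trick (replacing $f$ by $k_0^b+s k_w^b$ and varying $s\in\mathbb C$) succeed, because the constant term in $s$ already has slack $\epsilon|b(0)|^2>0$ which drowns the linear information. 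In short: knowing $X_jk_0^b$ plus the Gleason identity and contractivity does not pin down $X_jk_w^b$ for $w\neq 0$, because when $d>1$ the relation $\sum_jz_jg_j=0$ has a large kernel, and contractivity leaves room for a nonzero perturbation whenever the tuple $(b_j)$ is not extremal. Any complete proof must exploit more structure --- for instance, the realization-theoretic arguments in \cite{JurMar}/\cite{BBF} that produce and constrain the column entries $b_j$ of the functional-model colligation globally rather than from a single-point evaluation.
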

The set all contractive solutions $X$ is in one-to-one correspondence with the set of all tuples $b_1, \dots b_d$ satisfying these conditions \cite[Theorem 4.10]{JurMar}. We will call such sets of $b_j$ \emph{admissable}, or say that such a set is a \emph{contractive Gleason solution for $b$}.

In turns out that for some contractive multipliers $b$, the operators $X_j$ of the proposition are unique, that is, there is only one admissible tuple. When this happens we will call the multiplier $b$ {\em quasi-extreme}. (The original definition of quasi-extreme in \cite{Jur1} is different, involving the so-called noncommutative Aleksandrov-Clark state for $b$, but this definition will be easier to work with for the present purposes.) In \cite{Jur1} and \cite{JurMar} we gave a number of equivalent formulations of quasi-extremity, we recall only a few of them here.
\begin{prop}\label{prop:qe-conditions} Let $b$ be a contractive multiplier of $H^2_d$. The following are equivalent:
  \begin{itemize}
  \item[i)] $b$ is quasi-extreme.
\item[ii)] There is a unique contractive solution $(X_1, \dots X_d)$ to the Gleason problem in $\mathcal H(b)$.
\item[iii)] There exists a contractive solution $(X_1, \dots X_d)$ such that the equality $\sum_{j=1}^d \|X_jf\|_b^2 =\|f\|^2_b -|f(0)|^2$ holds for every $f\in\mathcal H(b)$.
\item[iv)] There is a unique admissible tuple $(b_1, \dots b_d)$ satisfying the conditions of Proposition~\ref{prop:gleason}.
\item[v)] All admissible tuples $(b_1, \dots b_d)$ are \emph{extremal}, \emph{i.e.}
$$\sum_{j=1}^d \|b_j\|_b^2 =1-|b(0)|^2,$$ for any admissable tuple.
\item[vi)] $\mathcal H(b)$ does not contain the function $b$.
\item[vii)] $\mathcal H(b)$ does not contain the constant functions.
\end{itemize}

\end{prop}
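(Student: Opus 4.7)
This proposition consolidates results established in the authors' prior work \cite{Jur1,JurMar}, so my plan is to outline a chain of implications drawing on those sources, partitioning the seven conditions into three clusters. The implications (i)$\Leftrightarrow$(ii)$\Leftrightarrow$(iv) are essentially tautological: by definition $b$ is quasi-extreme when the contractive Gleason operators are unique, giving (i)$\Leftrightarrow$(ii), and the bijection in Proposition~\ref{prop:gleason} (see also \cite[Thm 4.10]{JurMar}) between contractive Gleason solutions and admissible tuples yields (ii)$\Leftrightarrow$(iv).

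Second, I would cluster (iv), (v), (iii) as convexity/extremal statements. The admissible tuples form the intersection of the affine subspace $A=\{(u_1,\dots,u_d)\in\mathcal{H}(b)^d : \sum_j z_j u_j = b-b(0)\}$ with the closed ball $B$ of radius $\sqrt{1-|b(0)|^2}$ in $\mathcal{H}(b)^d$. A convexity argument should show that uniqueness of this intersection (iv) is equivalent to it lying entirely on the boundary sphere (v): any admissible tuple in the interior of $B$ can be perturbed within $A$ to produce a second admissible tuple, while tangency to the sphere forces a single intersection point. For (iii), I would compute the defect $D(f):=\|f\|_b^2-|f(0)|^2-\sum_j\|X_jf\|_b^2$ on reproducing kernels $k_w^b$ via formula~\eqref{Xj-def}; after polarization this should reduce to an expression in $(1-|b(0)|^2)-\sum_j\|b_j\|_b^2$, so that the operator identity $D\equiv 0$ is equivalent to extremality of the corresponding admissible tuple.

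Third, I would connect (iv) to (vi) and (vii). Given two distinct admissible tuples $(b_j)$ and $(b_j')$, the difference $u_j:=b_j-b_j'$ lies in $\mathcal{H}(b)^d$, is nonzero, and satisfies the syzygy identity $\sum_j z_j u_j\equiv 0$. From this syzygy I would extract a nonzero constant function in $\mathcal{H}(b)$ by a direct reproducing-kernel computation using \eqref{Xj-def}, contradicting (vii). Conversely, if the constant $1\in\mathcal{H}(b)$ or $b\in\mathcal{H}(b)$, one should manufacture a nontrivial syzygy $(u_j)\in\mathcal{H}(b)^d$ with $\sum_j z_j u_j=0$, which added to any admissible tuple produces a second one, contradicting (iv). The equivalence (vi)$\Leftrightarrow$(vii) follows from the identity $b=b(0)\cdot 1+\sum_j z_j b_j$ together with the containment $\sum_j z_j b_j\in\mathcal{H}(b)$, verified directly via the reproducing kernel norm.

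The main obstacle I foresee is the third cluster, specifically the two-way passage between uniqueness of admissible tuples and membership of the constant $1$ in $\mathcal{H}(b)$. The identity $\sum_j z_j u_j=0$ alone does not obviously yield a constant, so this step requires careful exploitation of the kernel formula \eqref{Xj-def} and the Hilbert-space structure of $\mathcal{H}(b)$; one essentially needs that evaluating the Gleason operators against $u_j$ recovers a multiple of $1$. The other equivalences are either definitional or follow cleanly from convexity considerations on the set of admissible tuples.
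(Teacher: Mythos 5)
The paper does not actually prove Proposition~\ref{prop:qe-conditions}: it is recalled verbatim from \cite{Jur1} and \cite{JurMar} (the text reads ``we gave a number of equivalent formulations of quasi-extremity, we recall only a few of them here''), and what Section~\ref{sec:non-qe} develops is downstream machinery that \emph{uses} the equivalences, not a proof of them. So there is no in-paper argument to compare your outline against; you are correct to treat this as a citation-consolidation exercise.

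That said, your outline has two genuine gaps as written. First, the convexity argument for (iv)$\Leftrightarrow$(v) is incomplete: from ``the admissible tuples are the intersection of the affine set $A$ with the ball $B$'' you conclude that uniqueness forces tangency, but this step requires the affine set $A$ to have positive intrinsic dimension inside $\mathcal H(b)^d$, i.e.\ that the solution space of the homogeneous equation $\sum_j z_j u_j = 0$ with $u_j\in\mathcal H(b)$ is nontrivial. This is not automatic: $\mathcal H(b)$ is not invariant under the coordinate multipliers in general, so you cannot manufacture syzygies by hand, and if the syzygy space were trivial the intersection $A\cap B$ would be a singleton whether or not it lies on the sphere, making (iv) vacuous and breaking the equivalence. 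You must either prove that the syzygy space is nontrivial whenever a strictly sub-extremal tuple exists, or (as the authors' cited works do) avoid the convexity framing altogether and route through (vii).

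Second, you treat (iii) as if polarizing the defect identity immediately yields (v), but (iii) is existential (\emph{there exists} a solution with zero defect) while (v) is universal (\emph{every} admissible tuple is extremal). Polarizing \eqref{Xj-def} does show that the defect of a particular solution $X$ is $|a_0|^2\,|\langle f,b\rangle_b|^2$ where $|a_0|^2 = 1-|b(0)|^2-\sum_j\|b_j\|_b^2$ for \emph{that} tuple, so (iii) is equivalent to ``some admissible tuple is extremal''; passing from ``some'' to ``all'' needs a further argument (e.g.\ that extremality of one tuple forces $1\notin\mathcal H(b)$, which then forces extremality of every tuple). Your third cluster is, as you suspect, where the real content lives: the bridge between uniqueness/extremality and the membership $1\in\mathcal H(b)$ is precisely what the paper's Section~\ref{sec:non-qe} is designed to handle, via the Herglotz space $\mathscr L(b)$, the unitary $f\mapsto(1-b)f$, and the partial isometry $V$ whose coisometric defect is exactly the constants. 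A syzygy-based argument would have to rediscover this structure; the bare identity $\sum_j z_j u_j = 0$ does not by itself produce a constant in $\mathcal H(b)$.
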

In \cite{JurMar} these equivalences were extended to the case of operator-valued $b$.

What will be most useful in what follows is item (v); in particular $b$ is {\em not} quasi-extreme if and only if there exists an admissible tuple $(b_1, \dots b_d)$ which obeys the strict inequality
\begin{equation*}
  \sum_{j=1}^d \|b_j\|_b^2 <1-|b(0)|^2.
\end{equation*}
\section{Non-quasi-extreme $b$}\label{sec:non-qe}

Let $b$ denote a contractive multiplier of the Drury-Arveson space $H^2_d$ on the unit ball $\mathbb B^d\subset \mathbb C^d$.  We assume throughout that $b(z)$ is not constant. We let $G_b(z)$ denote the Cayley transform or Herglotz-Schur function of $b$:
\begin{equation}
  G_b(z)=\frac{1+b(z)}{1-b(z)}
\end{equation}
and we contstruct the reproducing kernel Hilbert spaces  $\mathcal H(b), \mathscr L(b)$, the deBranges-Rovnyak and Herglotz spaces of $b$, respectively, with the kernels
\begin{equation}
  k_w^b(z) := \frac{1-b(z)b(w)^*}{1-zw^*},
\end{equation}
and,
\begin{equation}\label{L(b)-kernel}
  K_w^b(z) := \frac{1}{2} \frac{G_b(z)+G_b(w)^*}{1-zw^*} = (1-b(z))^{-1} (1-b(w)^*)^{-1} k^b_w(z).
\end{equation}
 The map $f\to (1-b)f$ is thus a unitary multiplier from $\mathscr L(b)$ onto $\mathcal H(b)$.

We define an operator $V:\mathscr L(b)^d\to \mathscr L(b)$ by declaring
\begin{equation}
  V\begin{pmatrix} z_1^* K_z^b \\ \vdots \\ z_d^* K_z^b\end{pmatrix} := K_z^b -K_0^b
\end{equation}
on the span of the the columns appearing in the definition, and defining $V$ to be $0$ on the orthogonal complement of this span. A quick calculation using the formula for the reproducing kernel (\ref{L(b)-kernel}) shows that
\begin{equation*}
  zw^* K^b_w(z) =\langle K^b_w-K_0^b,K_z^b-K_0^b\rangle_{\mathscr L(b)}\quad \text{ for all } z,w\in\mathbb B^d
\end{equation*}
and hence that $V$ is a partial isometry.  It follows that $V^*$ is $0$ on the orthogonal complement of the set $\{K_z^b-K_0^b:z\in\mathbb B^d\}$.  Note that a vector $f\in \mathscr L(b)$ is orthogonal to this set if and only if $f(z)=f(0)$ for all $z$; that is, if and only if $f$ is constant. (In particular $V$ is a coisometry if and only if the only constant function in $\mathscr L(b)$ is $0$.)  We next observe:
\begin{lem}
  The space $\mathscr L(b)$ contains the constants if and only if $\mathcal H(b)$ contains $b$; that is, by Proposition~\ref{prop:qe-conditions}, if and only if $b$ is not quasi-extreme.
\end{lem}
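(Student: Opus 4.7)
My plan is to reduce the claim, via the unitary multiplier $U=M_{1-b}:\mathscr{L}(b)\to\mathcal{H}(b)$ of the preceding paragraph, to an equivalent statement inside $\mathcal{H}(b)$, and then dispatch that statement with a short algebraic identity involving the kernel $k_0^b$.

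First I would note that $U$ is a linear bijection sending each constant $c$ to $c(1-b)$, so $\mathscr{L}(b)$ contains some nonzero constant if and only if $1\in\mathscr{L}(b)$, which is in turn equivalent to $1-b=U(1)\in\mathcal{H}(b)$. Thus the lemma reduces to proving that $1-b\in\mathcal{H}(b)$ if and only if $b\in\mathcal{H}(b)$; Proposition~\ref{prop:qe-conditions}(vi) then translates the latter into non-quasi-extremity.

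For the remaining equivalence I would use the fact that the reproducing kernel
\[ k_0^b(z)=1-\overline{b(0)}\,b(z) \]
belongs to $\mathcal{H}(b)$. In one direction, if $b\in\mathcal{H}(b)$ then $\overline{b(0)}\,b\in\mathcal{H}(b)$ and hence $1=k_0^b+\overline{b(0)}\,b\in\mathcal{H}(b)$, so $1-b\in\mathcal{H}(b)$. In the other direction, if $1-b\in\mathcal{H}(b)$ then
\[ (1-b)-k_0^b=(\overline{b(0)}-1)\,b\in\mathcal{H}(b); \]
since $b$ is non-constant, positivity of $k^b$ gives $|b(z)|\le 1$ on $\mathbb{B}^d$ and the maximum modulus principle forces $|b(0)|<1$, so the scalar $\overline{b(0)}-1$ is nonzero and $b\in\mathcal{H}(b)$.

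The argument is essentially algebraic; the only thing to notice is the decomposition $1=k_0^b+\overline{b(0)}\,b$ of the constant function expressing it in terms of two elements of $\mathcal{H}(b)$, and there is no real obstacle once this is spotted.
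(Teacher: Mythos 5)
Your proof is correct and follows essentially the same route as the paper: reduce via the unitary multiplier $M_{1-b}$ to the statement $1-b\in\mathcal{H}(b)$, then pass between $1-b$ and $b$ using the fact that $k_0^b=1-\overline{b(0)}\,b$ always lies in $\mathcal{H}(b)$. The paper compresses this into one line; you simply spell out the algebra and make explicit the use of non-constancy of $b$ (via $|b(0)|<1$) needed to ensure the scalar $\overline{b(0)}-1$ is nonzero.
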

\begin{proof}
  $1\in\mathscr L(b)$ if and only if $1-b\in \mathcal H(b)$; since $k_0^b=1-b(z)b(0)^* \in\mathcal H(b)$ always, the lemma follows.
\end{proof}
For the remainder of this section we assume that $b$ is not
quasi-extreme, so by the lemma, $\mathscr L(b)$ contains the constant functions.  By construction the tuple $(V_1, \dots V_d)$ is a row
contraction and $\sum_{j=1}^d V_j V_j^* $ is the projection in
$\mathscr L(b)$ orthogonal to the constants; so that $V_j^*1=0$ for all
$j$.  We first record some facts about the $V_j$ that will be of use
later.

From the definition of $V$ we have
\begin{equation}
  \label{eq:1}
  V_j^*(K_z-K_0) = z_j^* K_z^b.
\end{equation}
We also record the following chain of equalities for later use; these
use only the fact that $V_j^*1=0$: For each $j$,
\ba \label{Vj*K0}
V_j^*(K_0) & = &   V_j^* \frac{1}{2} \left( \frac{2}{1-b} -1 + \frac{1+ b(0) ^* }{1-b(0)} ^* \right) \nn \\
 & = &  V_j ^* \left( \frac{1}{1-b}-1 \right) \nn \\
 & = &V_j^* \left( \frac{b}{1-b} \right). \label{Vj*K0} \ea

We next observe that the $V_j$ solve the Gleason problem in $\mathscr L(b)$; indeed for $f\in\mathscr L(b)$ we take the inner product of $f$ with the identity
\begin{equation}
  K_z^b -K_0^b = \sum_{j=1}^d z_j^*V_j K_z^b
\end{equation}
we get
\begin{equation}
  f(z)-f(0)= \sum_{j=1}^d z_j (V_j^*f) (z).
\end{equation}

We can now define operators $S_j$ on $\mathcal H(b)$ conjugate to the $V_j$ via the unitary $g\to \frac{1}{\sqrt{2}}(1-b)g$; specifically for $g\in\mathcal H(b)$ we define
\begin{equation}
  (S_j^*g)(z) = (1-b)V_j^*\frac{g}{1-b}.
\end{equation}
Again the row $S=(S_1, \dots, S_d)$ is a row contraction; in fact a row partial isometry whose final space $\text{ran}(\sum_{j=1}^d S_j S_j^*)$ is the orthognal complement of the one-dimensional space spanned by $1-b$.

We now use the operators $S_j$ to define an admissible tuple $b_1, \dots b_d$ and construct the associated solution to the Gleason problem in $\mathcal H(b)$ with good extremal properties. In particular put
\begin{equation}\label{bj-def}
  b_j = (1-b(0) )S_j^*b \in\mathcal H(b)
\end{equation}
and define operators $X_j$ as in (\ref{Xj-def}).

\begin{prop}\label{canonical-solution}
  The tuple $X=(X_1, \dots X_d)$ is a contractive solution to the Gleason problem in $\mathcal H(b)$. Moreover it is the unique solution with the property such that
  \begin{equation}
    X_jb = b_j
  \end{equation}
where the $b_j$ are those belonging to $X_j$.
\end{prop}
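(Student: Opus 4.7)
My plan is to invoke Proposition~\ref{prop:gleason} to realize $X$ as a contractive Gleason solution (by showing the tuple of (\ref{bj-def}) is admissible), then verify the extremal identity $X_j b = b_j$, and finally use that identity to prove uniqueness. For admissibility condition (i), I would transport the Gleason identity $f(z)-f(0) = \sum z_j (V_j^* f)(z)$ on $\mathscr L(b)$ through the unitary $U: f \mapsto (1-b) f$. This gives, for any $g\in \mathcal H(b)$, the relation
\[
\sum_{j=1}^d z_j (S_j^* g)(z) = g(z) - \frac{(1-b(z)) g(0)}{1-b(0)};
\]
specializing to $g = b$ and multiplying by $1-b(0)$ yields $\sum z_j b_j(z) = b(z) - b(0)$. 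For condition (ii), since $V$ is a row partial isometry with $VV^* = I - Q$ (where $Q$ is the orthogonal projection onto the constants in $\mathscr L(b)$), combining with $V_j^*(b/(1-b)) = V_j^* K_0^b$ from (\ref{Vj*K0}) and the unitarity of $U$ yields
\[
\sum_{j=1}^d \|b_j\|_b^2 = |1-b(0)|^2\bigl(\|K_0^b\|_{\mathscr L(b)}^2 - \|QK_0^b\|_{\mathscr L(b)}^2\bigr) = (1-|b(0)|^2) - \frac{|1-b(0)|^2}{\|1\|_{\mathscr L(b)}^2},
\]
which is strictly less than $1-|b(0)|^2$ (consistent with $b$ not being quasi-extreme).

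To establish $X_j b = b_j$, I would first derive the kernel-level formula $S_j^* k_w^b = w_j^* k_w^b + \frac{1-b(w)^*}{1-b(0)} b_j$ from $S_j^* = U V_j^* U^{-1}$, the relation $U^{-1} k_w^b = (1-b(w)^*) K_w^b$, and the formulas $V_j^* K_w^b = w_j^* K_w^b + V_j^* K_0^b$ together with (\ref{Vj*K0}). Comparison with (\ref{Xj-def}) then shows that $X_j$ and the operator $g \mapsto S_j^* g - \frac{g(0)}{1-b(0)} b_j$ agree on every reproducing kernel, hence on all of $\mathcal H(b)$. Applied to $g = b$, using $S_j^* b = b_j/(1-b(0))$, this produces $X_j b = b_j$ after cancellation.

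For uniqueness, let $Y = (Y_1, \dots, Y_d)$ be another contractive Gleason solution with admissible tuple $(\beta_1, \dots, \beta_d)$ satisfying $Y_j b = \beta_j$. Subtracting the two instances of (\ref{Xj-def}) gives $(X_j - Y_j) k_w^b = b(w)^*(\beta_j - b_j)$, and a short adjoint calculation identifies $X_j - Y_j$ with the rank-one operator $g \mapsto \langle g, b\rangle_b (\beta_j - b_j)$. Applying this to $g = b$ yields $b_j - \beta_j = \|b\|_b^2(\beta_j - b_j)$, so $(1 + \|b\|_b^2)(b_j - \beta_j) = 0$, forcing $\beta_j = b_j$. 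Here $\|b\|_b$ is finite because $b \in \mathcal H(b)$ by the non-quasi-extremity assumption in force, Proposition~\ref{prop:qe-conditions}(vi). The main technical hurdle is the kernel-level computation of $S_j^* k_w^b$, which requires careful bookkeeping between the reproducing kernels of $\mathcal H(b)$ and $\mathscr L(b)$; once that formula is in hand, the extremal identity and uniqueness both fall out cleanly.
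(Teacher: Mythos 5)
Your proposal follows the paper's route essentially step for step: verify admissibility of the tuple defined in \eqref{bj-def} by transporting the Gleason identity on $\mathscr L(b)$ through the unitary $M_{1-b}$, then establish the key operator identity $X_j = S_j^* - (1-b(0))^{-1}\, b_j\otimes k_0^b$ at the level of kernels, and read off $X_j b = b_j$. The one genuine addition is that you spell out the uniqueness argument explicitly (subtract two instances of \eqref{Xj-def} to get a rank-one difference $(\beta_j - b_j)\otimes b$, apply it to $b$, and use $1+\|b\|_b^2>0$), which the paper leaves implicit by citing \cite{JurMar}; your norm computation via $VV^*=I-Q$ is also a sharper version of the paper's contraction estimate, yielding the exact defect $|1-b(0)|^2/\|1\|_{\mathscr L(b)}^2$ rather than just the inequality. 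Both are correct and welcome refinements, but the overall strategy is the same.
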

\begin{proof}
    The fact that $V^b$ can be used to define a contractive Gleason solution for $K(b)$ in this way is a special case of \cite[Theorem 4.4, Lemma 4.6]{JurMar}. We include a proof below for completeness.

  We first verify that the $b_j$ defined by (\ref{bj-def}) are
  admissible. Sine the $V_j^*$ solve the Gleason problem in $\mathscr
  L(b)$, we have
  \begin{align*}
    \sum_{j=1}^d z_j V_j^*(\frac{b}{1-b}) &= \frac{b(z)}{1-b(z)} -
    \frac{b(0)}{1-b(0)} \\
&= \frac{b(z)-b(0)}{(1-b(z))(1-b(0))}
  \end{align*}
so by the definition of $S_j^*$ and $b_j$
\begin{align*}
  b(z)-b(0) &= (1-b(0))(1-b(z))\sum_{j=1}^d z_j (V_j^*\frac{b}{1-b})(z) \\
&= (1-b(0))\sum_{j=1}^d z_j S_j^*b(z)\\
&= \sum_{j=1}^d z_jb_j(z).
\end{align*}
To prove the norm inequality, observe that
\begin{align*}
  \sum_{j=1}^d\|b_j\|^2_b &= \sum_{j=1}^d\|S_j^*b\|^2_b \\
&= |1-b(0)|^2 \sum_{j=1}^d \|V_j^*\frac{b}{1-b}\|^2_{\mathscr L(b)} \\
&= |1-b(0)|^2 \sum_{j=1}^d\|V_j^*K_0\|^2_{\mathscr L(b)}  \quad \text{by (\ref{Vj*K0})}\\
&\leq 1-|b(0)|^2.
\end{align*}
where the last inequality holds since $V^*$ is a column contraction
and $\|K_0||^2 = \frac{1-|b(0)|^2}{|1-b(0)|^2}$. Moreover, we observe that, since $V^*$ is a partial isometry, equality holds in the above chain if and
only if $K_0$ is orthogonal to the scalars, but this obviously never
happens, so the inequality is always strict in this case when $V^b$ is not a co-isometry. This also shows that this choice of admissible $b_j$ minimizes the sum $\sum_{j=1}^d \|b_j\|_b$ over all choices of admissible $b_j$ (see also \cite[Corollary 4.8, Remark 4.9]{JurMar}).

To show that $X_jb=b_j$, we first show that
\begin{equation}\label{Xj-vs-Sj}
  X_j = S_j^* - (1-b(0))^{-1}b_j\otimes k_0^b.
\end{equation}
This equation follows from Clark-type intertwining formulas of \cite[Section 4.15]{JurMar}.

Indeed, from (\ref{Vj*K0}) we have
\begin{equation*}
  V_j^*K_0 = V_j^*(\frac{1}{1-b}) = \frac{1}{1-b} S_j^*b =
  \frac{1}{(1-b)(1-b(0))} b_j
\end{equation*}
The formula (\ref{Xj-vs-Sj}) is then verified by checking it on kernels $k_w^b$, where we
have from the definition of $S_j^*$
\begin{align*}
  S_j^*k_w^b &= (1-b) V_j^*(\frac{1-bb(w)^*}{(1-b)(1-zw^*)}) \\
&=(1-b)(1-b(w)^*)V_j^*(K_w^b) \\
&= (1-b)(1-b(w)^*) V_j^*(K_w-K_0 +K_0)\\
&= w_j^*k_w^b + \frac{1-b(w)^*}{1-b(0)} b_j
\end{align*}
and so
\begin{align*}
  S_j^*k_w^b - [(1-b(0))^{-1}b_j\otimes k_0^b]k_w^b &= w_j^*k_w^b + \frac{1-b(w)^*}{1-b(0)} b_j  -\frac{1}{1-b(0)}(1-b(0)b(w)^*)b_j \\
&= w_j^*k_w^b-b(w)^*b_j \\
&= X_jk_w^b
\end{align*}
as desired.

Finally, the claim that $X_jb=b_j$ follows immediately from (\ref{Xj-vs-Sj}) and the definition of the $b_j$ in (\ref{bj-def}).
\end{proof}

{\em Remark:} We observe in passing that these $X_j$ annihilate the scalars: indeed, from the definition of $X_j$ in (\ref{Xj-def}) and the fact that $X_jb=b_j$, we have
\begin{equation*}
  X_j1 = X_j(1-b(0)^*b+b_0^*b) = X_j k_0^b + b(0)^*X_jb= -b(0)^*b_j+b(0)^*b_j=0.
\end{equation*}

We also have that the defect operator $I-\sum X_j^*X_j$ has rank two
when $b$ is non-extreme:
\begin{prop}\label{defect-identity}
  Let $b$ be a non-extreme multiplier. If $X_j$ is a solution to the
  Gleason problem in $\mathcal H(b)$ with $\sum\|b_j\|^2 = 1-|b(0)|^2
  -|a_0|^2$, then
  \begin{equation}\label{eqn:defects}
    I-\sum X_j^*X_j = k_0^b\otimes k_0^b +|a_0|^2 b\otimes b.
  \end{equation}
\end{prop}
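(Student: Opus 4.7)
The plan is to verify the identity \eqref{eqn:defects} as a sesquilinear form on the reproducing kernels $\{k_w^b : w \in \mathbb{B}^d\}$, which span a dense subspace of $\mathcal{H}(b)$. Note that the rank-one operator $b \otimes b$ appearing on the right only makes sense because $b \in \mathcal{H}(b)$, which holds precisely because $b$ is non-quasi-extreme, by Proposition~\ref{prop:qe-conditions}(vi).

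First, using the defining formula $X_j k_w^b = w_j^* k_w^b - b(w)^* b_j$ from Proposition~\ref{prop:gleason} and the reproducing property (taking inner products to be anti-linear in the second argument), I would expand
\begin{equation*}
\langle X_j k_w^b, X_j k_z^b\rangle_b = w_j^* z_j \, k_w^b(z) - w_j^* b(z) b_j(w)^* - b(w)^* z_j \, b_j(z) + b(w)^* b(z) \|b_j\|_b^2.
\end{equation*}
Summing over $j$ and applying the admissibility identity $\sum_j z_j b_j(z) = b(z) - b(0)$ (in both its forms, in $z$ and in $w$) together with the hypothesis $\sum_j \|b_j\|_b^2 = 1 - |b(0)|^2 - |a_0|^2$, the cross terms collapse to expressions involving $b(0)$ and $b(0)^*$.

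Next, I would substitute the identity $zw^* k_w^b(z) = k_w^b(z) - 1 + b(z) b(w)^*$, which is immediate from the definition of $k_w^b$. After cancellation of the mixed $b(z) b(w)^*$ terms, the result rearranges to
\begin{equation*}
\langle (I - \textstyle\sum_j X_j^* X_j) k_w^b, k_z^b\rangle_b = (1 - b(z) b(0)^*)(1 - b(0) b(w)^*) + |a_0|^2 \, b(z) b(w)^*.
\end{equation*}

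Finally, I would check that the right-hand side of \eqref{eqn:defects} produces precisely this sesquilinear form. Using $(f \otimes g) h = \langle h, g \rangle_b \, f$, one has $(k_0^b \otimes k_0^b) k_w^b = k_w^b(0)\, k_0^b = (1 - b(0) b(w)^*) k_0^b$, so $\langle (k_0^b \otimes k_0^b) k_w^b, k_z^b\rangle_b = (1 - b(z) b(0)^*)(1 - b(0) b(w)^*)$, and similarly $\langle |a_0|^2 (b \otimes b) k_w^b, k_z^b\rangle_b = |a_0|^2 b(z) b(w)^*$. These match the expression from the previous step, giving the identity on a dense set and hence everywhere. The computation is entirely routine; the only thing to keep track of is bookkeeping with conjugation conventions and the role of the norm defect $|a_0|^2$, which is exactly what supplies the second rank-one term on the right.
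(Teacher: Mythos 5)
Your proof is correct and follows essentially the same approach as the paper: expand the sesquilinear form $\langle X_j k_w^b, X_j k_z^b\rangle_b$ using the action of $X_j$ on reproducing kernels, sum over $j$, invoke admissibility of the $b_j$ together with the defect hypothesis, and identify the resulting kernel with $k_0^b\otimes k_0^b + |a_0|^2\, b\otimes b$. The preliminary remark that $b\in\mathcal H(b)$ (needed for $b\otimes b$ to make sense) is a reasonable addition that the paper leaves implicit from the standing assumptions of Section~3.
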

\begin{proof}
  We first compute the inner product $\langle X_j^* X_j k_w^b, k_z^b\rangle$, using (\ref{Xj-def}):
  \begin{align*}
    \langle X_j^* X_j k_w^b, k_z^b\rangle &= \langle X_j k_w^b, X_j k_z^b \rangle \\
 &= \langle w_j^*k_w^b - b(w)^*b_j, z_j^*k_z^b - b(z)^*b_j \rangle \\
&= z_j w_j^*k^b(z,w) -z_jb_j(z)b(w)^* -w_j^*b_j(w)^* b(z) + \|b_j\|^2_bb(z)b(w)^*.
  \end{align*}
Summing over $j=1, \dots d$ (and using the fact that the $b_j$ are admissible) gives
\begin{equation*}
  \sum_{j=1}^d \langle X_j^* X_j k_w^b, k_z^b\rangle = zw^* k^b(z,w) -(b(z)-b(0))b(w)^* -b(z)(b(w)^*-b(0)^*) +(1-|b(0)|^2 -|a_0|^2)b(z)b(w)^*.
\end{equation*}
Finally, we find
\ba
\ip{(I -\sum_{j=1}^d X_j X_j^*)k_w^b}{ k_z^b } & = & (1-zw^*) k^b(z,w) +(b(z)-b(0))b(w)^* +b(z)(b(w)^*-b(0)^*) \nn \\
&  & -(1-|b(0)|^2 -|a_0|^2)b(z)b(w)^*  \nn \\
&=&  1- b(z)b(0)^* -b(0)b(w)^* +b(0)b(z)b(w)^*b(0)^* +|a_0|^2 b(z)b(w)^* \nn \\
&=& \ip{(k_0^b\otimes k_0^b + |a_0|^2 b\otimes b)k_w^b}{k_z^b}. \nn \ea
This completes the proof.
\end{proof}
Since $b$ is assumed non-constant, this proposition shows that the range of $I-\sum_{j=1}^d X_j^*X_j$ is two dimensional, spanned by $k_0=1-b(0)^*b$ and $b$, or equivalently, by $b$ and $1$.  We can use this fact and the foregoing identity to relate the ``defect'' $|a_0|^2:=1-|b(0)|^2-\sum_{j=1}^d \|b_j\|_b^2$ of the admissible tuple $b_1, \dots b_d$ to the $\mathcal H(b)$-norm of the function $b$ (compare \cite{Sar},\cite{BalKri} for the scalar and vector valued cases, respectively, in one variable).

\begin{lem}\label{a0-calculation} Suppose $b$ is not quasi-extreme, and let $(X_1, \dots X_j)$ and $b_1, \dots b_j$ be as in Proposition~\ref{canonical-solution}. Define $a_0>0$ by $|a_0|^2 = 1-|b(0)|^2-\sum_{j=1}^d \|b_j\|^2_b$. Then
  \begin{equation}\label{a0-formula}
    |a_0|^2 = \frac{1}{1+\|b\|_b^2}.
  \end{equation}
\end{lem}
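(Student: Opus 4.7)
My approach is to substitute $f = b$ into the defect identity of Proposition~\ref{defect-identity}. Since $b$ is not quasi-extreme, $b \in \mathcal H(b)$ by Proposition~\ref{prop:qe-conditions}(vi), and by Proposition~\ref{canonical-solution} the canonical Gleason solution satisfies $X_j b = b_j$. Pairing $\bigl(I - \sum_j X_j^* X_j\bigr) b$ with $b$, the left-hand side collapses to $\|b\|_b^2 - \sum_j \|b_j\|_b^2$, while using $\langle b, k_0^b\rangle_b = b(0)$ (reproducing kernel) and $\langle b, b\rangle_b = \|b\|_b^2$ the right-hand side evaluates to $|b(0)|^2 + |a_0|^2\|b\|_b^4$.

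Equating the two expressions and substituting the definition $\sum_j \|b_j\|_b^2 = 1 - |b(0)|^2 - |a_0|^2$ yields the scalar identity
\[
(\|b\|_b^2 - 1)\bigl(1 - |a_0|^2(1 + \|b\|_b^2)\bigr) = 0,
\]
which gives (\ref{a0-formula}) whenever $\|b\|_b^2 \neq 1$. To handle the borderline case $\|b\|_b = 1$, I would pair the defect identity with $(1, b)$ instead, invoking the remark after Proposition~\ref{canonical-solution} that $X_j 1 = 0$. The decomposition $1 = k_0^b + b(0)^* b$ gives $\langle 1, b\rangle_b = b(0)^*(1 + \|b\|_b^2)$, and the corresponding scalar identity reduces to $|a_0|^2(1 + \|b\|_b^2) = 1$ as long as $b(0) \neq 0$. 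The sole remaining corner $b(0) = 0$ with $\|b\|_b = 1$ can be disposed of by the direct computation $\sum_j \|b_j\|_b^2 = |1-b(0)|^2 \sum_j \|V_j^* K_0^b\|_{\mathscr L(b)}^2$, using that $V$ is a partial isometry whose range is $\{\mathrm{constants}\}^\perp \subset \mathscr L(b)$, together with $\|1\|_{\mathscr L(b)}^2 = \|1 - b\|_b^2 = |1-b(0)|^2(1+\|b\|_b^2)$.

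The main obstacle is not conceptual but algebraic bookkeeping: no single test vector for the defect identity pins down $|a_0|^2$ on its own, so the degenerate locus where both factors of the displayed identity vanish must be closed out either by a second evaluation or by the direct $\mathscr L(b)$ computation. Once the right combination of test vectors is chosen, the rest is a straightforward application of the reproducing-kernel and admissibility identities already in hand.
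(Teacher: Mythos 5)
Your proposal is correct, but it takes a more roundabout route than the paper.  The paper also computes $\bigl(I-\sum_j X_j^*X_j\bigr)b$ in two ways, but rather than pairing the result against a single test vector, it equates the two expressions \emph{as functions}: method one gives $b(0)+b\sum_j\|b_j\|_b^2$, and the defect identity gives $b(0)+(-|b(0)|^2+|a_0|^2\|b\|_b^2)\,b$.  Since $b\not\equiv 0$, the coefficients of $b$ must agree, yielding $\sum_j\|b_j\|_b^2=-|b(0)|^2+|a_0|^2\|b\|_b^2$ exactly, which with the definition of $|a_0|^2$ gives $|a_0|^2(1+\|b\|_b^2)=1$ in one step, with no degenerate locus.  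In effect, the paper's argument is pairing against \emph{every} kernel $k_z^b$, which is strictly more information than the single pairings $\langle\cdot,b\rangle_b$ or $\langle\cdot,1\rangle_b$ that you use; that extra information is precisely what makes the factor $(\|b\|_b^2-1)$ disappear before it ever shows up.

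Your case analysis does close cleanly: pairing with $b$ handles $\|b\|_b\neq 1$, pairing with $1$ handles $b(0)\neq 0$, and the $\mathscr L(b)$ computation handles the rest.  But it is worth noting that your third step is not merely a corner patch --- the identity
\[
\sum_j\|b_j\|_b^2=|1-b(0)|^2\sum_j\|V_j^*K_0^b\|_{\mathscr L(b)}^2
=|1-b(0)|^2\Bigl(\|K_0^b\|^2-\tfrac{1}{\|\mathbf 1\|_{\mathscr L(b)}^2}\Bigr)
=1-|b(0)|^2-\tfrac{1}{1+\|b\|_b^2}
\]
holds without any hypothesis on $\|b\|_b$ or $b(0)$, since $V$ is a partial isometry with final space $\{\text{constants}\}^\perp$, $\|K_0^b\|^2=\tfrac{1-|b(0)|^2}{|1-b(0)|^2}$, and $\|\mathbf 1\|_{\mathscr L(b)}^2=\|1-b\|_b^2=|1-b(0)|^2(1+\|b\|_b^2)$.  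So your ``fallback'' is actually a complete, self-contained proof that bypasses the defect identity entirely, and presenting it as the main argument would be both shorter and cleaner than the three-case structure.  One small caution: the paper contradicts itself about whether the multiplier $\mathscr L(b)\to\mathcal H(b)$ is $f\mapsto(1-b)f$ or $f\mapsto\tfrac{1}{\sqrt 2}(1-b)f$; a check on kernels confirms it is the former (no $\tfrac{1}{\sqrt 2}$), which is what your computation $\|\mathbf 1\|_{\mathscr L(b)}^2=\|1-b\|_b^2$ implicitly uses, so you are fine.
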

\begin{proof}
  We compute $(I-\sum_{j=1}^d X_j^*X_j)b$ in two different ways. First, from its definition, and using the fact that $X_jb=b_j$,
  $$ (I-\sum_{j=1}^d X_j^*X_j)b = b - \sum _{j=1} ^d X_j ^* b_j. $$
  Then observe that
  \ba \sum _{j=1} ^d (X_j ^* b_j) (z) & = & \sum _{j=1} ^d \ip{ X^* _j b_j}{k_z ^b} _b \nn \\
  & = & \sum _{j=1} ^d \ip{b_j}{z_j ^* k_z ^b - b_j b(z) ^* } _b \nn \\
  &= & b(z) - b(0) - \sum _{j=1} ^d \| b_j \| ^2 _b b(z), \nn \ea
  so that
  \be \label{a0-calc-1}
  (I-\sum_{j=1}^d X_j^*X_j)b =  b(0)+ b\sum_{j=1}^d \|b_j\|_b^2.
  \ee
(Here we have used the fact that the $X_j^*$ act by
  \begin{equation*}
    (X_j^*f)(z)= z_jf(z) -\langle f, b_j\rangle_b b(z),
  \end{equation*}
which follows easily from the definition of the $X_j$ and the reproducing formula $(X_j^*f)(z) =\langle X_j^*f, k_z^b\rangle_b = \langle f, X_jk_z^b\rangle_b$.)
Second, using the defect formula (\ref{eqn:defects}),
\begin{equation}\label{a0-calc-2}
    (I- \sum_{j=1}^d X_j^*X_j) b = b(0)k_0^b +|a_0|^2\|b\|_b^2 b = b(0)+(-|b(0)|^2 +|a_0|^2\|b\|_b^2)b(z).
  \end{equation}
Equating (\ref{a0-calc-1}) and (\ref{a0-calc-2}) gives
\begin{equation*}
b(0)+ b(z)\sum_{j=1}^d \|b_j\|_b^2 = b(0)+(-|b(0)|^2 +|a_0|^2\|b\|_b^2)b(z)
\end{equation*}
Subtracting $b(0)$ from both sides leaves an equality between two
constant multiples of $b(z)$; since $b$ is assumed nonzero we have
\begin{equation*}
-|b(0)|^2 +|a_0|^2\|b\|_b^2 = \sum_{j=1}^d \|b_j\|_b^2 = 1-|b(0)|^2-|a_0|^2
\end{equation*}
and solving for $|a_0|^2$ gives (\ref{a0-formula}).
\end{proof}

\section{The $a$-function}\label{sec:a-function}

In this section we prove the first half of Theorem~\ref{thm-main}:

\begin{prop}\label{prop-a-function} If $b$ is not quasi-extreme, then there exists a nonzero multiplier $a$ such that
  \begin{equation*}
    M_a^*M_a +M_b^*M_b\leq I.
  \end{equation*}
\end{prop}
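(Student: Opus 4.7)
The plan is to construct $a$ via the Ball--Bolotnikov--Fang realization theorem for contractive multipliers of $H^2_d$ (the same result that underlies Proposition~\ref{prop:gleason}, via \cite{BBF}). The key idea is that the canonical Gleason solution of Proposition~\ref{canonical-solution} provides a natural colligation realizing $b$, and the defect of this colligation---which is rank one, and nonzero precisely when $b$ is not quasi-extreme---carries enough room to accommodate an extra output channel whose transfer function is the desired $a$.

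Concretely, using the canonical admissible tuple $(b_1,\dots,b_d)$ from Proposition~\ref{canonical-solution}, form the colligation
\[
U = \begin{pmatrix} A & B \\ C & D \end{pmatrix} \colon \mc{H}(b) \oplus \C \longrightarrow \mc{H}(b)^d \oplus \C,
\]
with $Af = (X_1 f, \dots, X_d f)$, $B\la = \la(b_1,\dots,b_d)$, $Cf = f(0) = \ip{f}{k_0^b}_b$, and $D = b(0)$. A block-by-block computation of $U^*U$ proceeds as follows: the $(1,1)$ entry $\sum_j X_j^* X_j + k_0^b \otimes k_0^b$ equals $I - |a_0|^2\, b \otimes b$ by the defect identity (Proposition~\ref{defect-identity}); the $(2,2)$ entry $\sum \|b_j\|_b^2 + |b(0)|^2$ equals $1 - |a_0|^2$; and the off-diagonal entry $A^*B + C^*D$ evaluates to $|a_0|^2 b$ using the identity $\sum_j X_j^* b_j = (|b(0)|^2 + |a_0|^2)\, b - b(0)$ that already appears inside the proof of Lemma~\ref{a0-calculation}. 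Assembling these,
\[
I - U^*U \;=\; |a_0|^2 \begin{pmatrix} b \\ -1 \end{pmatrix} \begin{pmatrix} \ip{\cdot}{b}_b & -1 \end{pmatrix}
\]
is a rank-one positive operator on $\mc{H}(b) \oplus \C$ whose non-vanishing encodes exactly the failure of quasi-extremity (Lemma~\ref{a0-calculation}).

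Given this explicit rank-one defect, extend $U$ to
\[
U' = \begin{pmatrix} A & B \\ \tilde C & \tilde D \\ C & D \end{pmatrix} \colon \mc{H}(b) \oplus \C \longrightarrow \mc{H}(b)^d \oplus \C^2
\]
by adjoining the new output row $(\tilde C, \tilde D) := a_0\,(\ip{\cdot}{b}_b, -1)$. This choice is rigged so that $(\tilde C^*, \tilde D^*)^T (\tilde C, \tilde D)$ exactly fills the rank-one defect, giving $(U')^* U' = I$; hence $U'$ is an isometric colligation. Setting $a(z) := \tilde D + \tilde C\,(I - ZA)^{-1} ZB$ (with $Z = (z_1 I, \dots, z_d I)$), the realization theorem of \cite{BBF} says that the isometric $U'$ realizes the column $(a, b)^T$ as a contractive multiplier from $H^2_d$ into $H^2_d \oplus H^2_d$, which is precisely the inequality $M_a^* M_a + M_b^* M_b \leq I$. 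Since $a(0) = \tilde D = -a_0 \neq 0$, $a$ is non-trivial. The only non-routine step in the program is the bookkeeping for the off-diagonal $(1,2)$ block of $U^*U$; once $I - U^*U$ has been cast in the displayed rank-one form, the extension and the invocation of the realization theorem are automatic.
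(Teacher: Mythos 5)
Your proposal is correct and follows essentially the same route as the paper: form the generalized functional-model colligation $U$ for $b$ from the canonical Gleason solution of Proposition~\ref{canonical-solution}, use Proposition~\ref{defect-identity} and the identity from Lemma~\ref{a0-calculation} to see that $I-U^*U$ is the rank-one operator $|a_0|^2\,(b,-1)\otimes(b,-1)$, and append one scalar output row to make the colligation isometric, with $a$ the transfer function of the appended row. The only differences from the paper's proof are cosmetic: a choice of sign in the new row (so you get $a(0)=-a_0$ rather than $a_0$), the explicit rank-one display of the defect (the paper verifies $\widetilde{U}^*\widetilde{U}=I$ directly), and the placement of the new output row.
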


In the one-variable case if $b$ is not extreme then there is an outer function $a$ defined by the property that
\begin{equation}\label{a-outer-def}
  |a(\zeta)|^2+|b(\zeta)|^2 =1 \quad \text{a.e. on }\mathbb T;
\end{equation}
we can assume that $a(0)> 0$. In the above $\T$ denotes the unit circle. For this $a$ we have immediately
$M_a^*M_a+M_b^*M_b=I$. It is known in general that an equality of this
sort cannot hold when $d>1$ except in trivial cases (where the
functions are constant), see \cite{guo-etal}. In any case, when $d>1$ we do
not have any direct recourse to the theory of outer functions so different
methods are required.

Nonetheless, the proof of (\ref{prop-a-function}) is in a sense constructive: $a$ will be given in terms of a {\em transfer function realization} \cite{BTV},\cite{BBF}. It is remarkable that the algebraic construction given here, if carried out in one variable, produces exactly the outer function in (\ref{a-outer-def}). This follows from our transfer function realization and Sarason's computation of the Taylor coefficients of $a$ \cite{Sar}; we prove this at the end of the section.

We begin by recalling the relevant facts about transfer function realizations \cite{BTV} and the {\em generalized functional models} of \cite{BBF}.

Let $\mathcal X,\mathcal U, \mathcal Y$ be Hilbert spaces and let
$\mathcal X^d$ denote the direct sum of $d$ copies of $\mathcal X$. By
a {\em $d$-colligation} we mean an operator ${\bf U}:\mathcal X\oplus
\mathcal U\to \mathcal X^d\oplus \mathcal Y$ expressed in the block matrix form
\begin{equation*}
  {\bf U} = \begin{bmatrix} A & B \\ C & D \end{bmatrix}
  = \begin{bmatrix} A_1 & B_1 \\ \vdots & \vdots \\ A_d & B_d \\ C &
    D\end{bmatrix} :\begin{bmatrix} \mathcal X \\ \mathcal U\end{bmatrix} \to \begin{bmatrix} \mathcal X^d \\ \mathcal Y\end{bmatrix}
\end{equation*}
The colligation is called {\em contractive, isometric, unitary, etc.} if ${\bf U}$ is an operator of that type. For points $z=(z_1, \dots z_d)\in\mathbb C^d$, it will be convenient to identify $z$ with the row contraction:
\begin{equation*}
  z :\mathcal X^d\to \mathcal X; \quad \quad z \bbm x_1 \\ \vdots \\ x_d \ebm := z_1 x_1 + ... + z_d x_d.
\end{equation*}
Observe that $\| z \|^2 = \| z z^* \| _{\L ( \mc{X})}  =\sum_{j=1}^d |z_j|^2$, so $\| z \| = |z | _{\C ^d} <1$ if and only if $z\in \mathbb B^d$. If ${\bf U}$ is a contractive colligation, the {\em transfer function} for ${\bf U}$ is
\begin{equation*}
  S(z) = D+ C(I-zA)^{-1}zB.
\end{equation*}
The transfer function $S(z)$ is a holomorphic function in $\mathbb
B^d$ taking values in the space of bounded operators from $\mathcal U$
to $\mathcal Y$. (For our purposes we will only need to consider
finite-dimensional $\mathcal U$ and $\mathcal Y$). It is a theorem of
Ball, Trent and Vinnikov \cite{BTV} that $b$ is a contractive
multiplier of $H^2_d\otimes \mathcal U$ into $H^2_d \otimes \mathcal
Y$ if and only if it possesses a transfer function realization. In
\cite{BBF}, it was shown that such a transfer function could always be
chosen to be of a special form, called a {\em generalized functional
  model} realization.  In particular, (in the case $\mathcal
U=\mathcal Y=\mathbb C)$ if $X=(X_1, \dots X_d)$ is a contractive
solution to the Gleason problem in $\mathcal H(b)$, if we take
$\mathcal X=\mathcal H(b)$ and define for all $f\in\mathcal H(b)$ and
$\lambda \in\mathbb C$
\begin{itemize}
\item $A_jf=X_jf$, $j=1, \dots d$
\item $B_j\lambda = \lambda b_j$, $j=1, \dots d$
\item $Cf =f(0)$
\item $D\lambda = b(0)\lambda$
\end{itemize}
then the corresponding colligation is contractive and its transfer
function is $b(z)$.  Since $Cf =f(0)=\langle f, k_0^b \rangle$, we will write $k_0^{b^*}$ for $C$ and express the colligation as
\begin{equation*}
  {\bf U} = \begin{bmatrix} X_1 & b_1 \\ \vdots & \vdots \\ X_d & b_d \\ k_0^{b*} & b(0)\end{bmatrix}
\end{equation*}

\begin{proof}[Proof of Proposition~\ref{prop-a-function}]
Fix the admissible tuple $(b_1, \dots b_d)$ and corresponding operators $(X_1, \dots X_d)$ of  Proposition~\ref{canonical-solution}.
We can then consider the colligation acting between $\mathcal H(b)\oplus \mathbb C$ and $\mathcal H(b)\oplus \mathbb C^2$ given by
\begin{equation*}
{\bf \widetilde{U}} = \begin{bmatrix} X_1 & b_1 \\ \vdots & \vdots \\ X_d & b_d \\ k_0^{b*} & b(0) \\ -a_0b^* & a_0\end{bmatrix}
\end{equation*}

We claim that ${\bf \widetilde{U}}$ is isometric. If this is so, then
the colligation
\begin{equation*}
{\bf V} = \begin{bmatrix} X_1 & b_1 \\ \vdots & \vdots \\ X_d & b_d \\ -a_0b^* & a_0\end{bmatrix}
\end{equation*}
is contractive, and hence the associated transfer function is a
contractive multiplier $a(z)$. Moreover it is apparent that $a$ is nonzero, since $a(0)=a_0\neq 0$.

With $a$ defined in this way, the transfer function associated to ${\bf \widetilde{U}}$ is the $2\times 1$ multiplier
\begin{equation*}
  \begin{pmatrix} b \\ a\end{pmatrix}
\end{equation*}
which is contractive; this proves the proposition.

It remains to prove the claim that ${\bf \widetilde{U}}$ is isometric. Let us write out ${\bf \widetilde{U}}^* {\bf \widetilde{U}}$ explictly; we have
\begin{equation*}
  {\bf \widetilde{U}}^* {\bf \widetilde{U}} = \begin{bmatrix} \sum_{j=1}^d X_jX_j^* + k_0^b\otimes k_0^b +|a_0|^2b\otimes b & \sum_{j=1}^d X_j^*b_j  +b(0)k_0^b -|a_0|^2 b\\ \ast & \sum_{j=1}^d \|b_j\|_b^2 +|b(0)|^2+|a_0|^2\\\end{bmatrix}
\end{equation*}
where we note that $(2,1)$ entry is just the adjoint of the $(1,2)$ entry.  We consider the entries of the right-hand side one at a time.

The $(1,1)$ entry is equal to the identity operator on $\mathcal H(b)$ by (\ref{eqn:defects}).

The $(2,2)$ entry is equal to $1$ by the definition of $a_0$ in Proposition~\ref{defect-identity}.

The $(1,2)$ (and by symmetry, $(2,1)$) entry is equal to $0$. To see this we use again the fact that $X_jb=b_j$ and compute:
\begin{align*}
  \sum_{j=1}^d X_j^*b_j &= \sum_{j=1}^d \left[z_jb_j(z) -b(z)\|b_j\|_b^2 \right] \\
&= -b(0) +b(z)(1-\sum_{j=1}^d \|b_j\|_b^2) \\
&= -b(0) +(|a_0|^2 +|b(0)|^2)b(z) \\
&= -b(0)(1-b(0)^*b(z)) +|a_0|^2b(z) \\
&= -b(0)k_0^b + |a_0|^2b(z)
\end{align*}
Thus ${\bf \widetilde{U}}$ is isometric, which finishes the proof.

\end{proof}

\subsection{The one-variable case}

We analyze the foregoing construction in the one-variable
case. Here the Drury-Arveson space becomes the classical Hardy
space $H^2(\mathbb D)$ and its multiplier algebra is the space of
bounded analytic functions $H^\infty(\mathbb D)$, equipped with the
supremum norm. In this case it is known \cite{Sar-book} that
$b\in ball(H^\infty)$ is quasi-extreme if and only if it is an extreme
point of $ball(H^\infty)$, which is equivalent to the condition
\begin{equation}\label{szego-integral}
  \int_{\mathbb T} \log (1-|b|^2)\, dm =-\infty.
\end{equation}
(See \cite[p.138]{Hof-book}). Conversely, if $b$ is not (quasi)-extreme, this integral is finite,
and hence there exists (as noted at the beginning of this section) an outer function $a\in ball(H^\infty)$
satisfying
\begin{equation}\label{a-disk-def}
  |a(\zeta)|^2+|b(\zeta)|^2 = 1
\end{equation}
for almost every $|\zeta|=1$; this $a$ is unique if we impose the
normalization $a(0)>0$.

In this setting, there is of course ever only one solution to the
Gleason problem in $\mathcal H(b)$, namely the usual backward shift
operator on holomorphic functions
\begin{equation*}
  S^*f(z)= \frac{f(z)-f(0)}{z}.
\end{equation*}
Following Sarason \cite{Sar} we denote the restriction
\begin{equation*}
  X =S^*|_b.
\end{equation*}
All of the above discussion of transfer function realizations applies
here, so $b$ is realized by the colligation
\begin{equation*}
  {\bf U} = \begin{bmatrix} X & S^*b \\ k_0^{b*} & b(0)\end{bmatrix}
\end{equation*}

Let now $a$ be the outer function of (\ref{a-disk-def}) with $a(0)>0$.

We expand $a$ as a power series
\begin{equation*}
  a(z)=\sum_{n=0}^\infty \hat{a}(n)z^n.
\end{equation*}
Sarason \cite{Sar}  proves the following formula for the Taylor coefficients
$\hat{a}(n)$:
\begin{prop}\label{a-coefficients}
  We have $|a(0)|^2 = \frac{1}{1+\|b\|_b^2}$ and for $n\geq 1$
  \begin{equation*}
    \langle X^nb,b\rangle_{{\mathcal H}(b)} = \frac{-\hat{a}(n)}{a(0)}.
  \end{equation*}
\end{prop}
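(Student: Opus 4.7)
The plan is to read both assertions off the transfer function realization of $a$ produced in Proposition~\ref{prop-a-function}. The first identity is essentially already present in Section~\ref{sec:non-qe}: in one variable the admissibility condition (i) of Proposition~\ref{prop:gleason} forces the unique choice $b_1=S^*b$, and the scalar $a_0=a(0)$ that appears as the $(2,2)$-entry of the colligation ${\bf V}$ is the very same $a_0$ analyzed in Lemma~\ref{a0-calculation}. So the formula $|a(0)|^2=1/(1+\|b\|_b^2)$ is just Lemma~\ref{a0-calculation} rewritten in the present notation.

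For the Taylor coefficients I would start from the one-variable specialization of ${\bf V}$,
\begin{equation*}
{\bf V} = \begin{bmatrix} X & S^*b \\ -a_0 b^* & a_0 \end{bmatrix},
\end{equation*}
and expand its transfer function. Since $X$ is a contraction on $\mathcal H(b)$, the Neumann series $(I-zX)^{-1}=\sum_{n\geq 0} z^n X^n$ converges in operator norm for every $z\in\mathbb D$, and therefore
\begin{equation*}
a(z) = a_0 + (-a_0 b^*)(I-zX)^{-1}z(S^*b) = a_0 - a_0 \sum_{n\geq 0} z^{n+1}\,\langle X^n S^*b,\, b\rangle_{\mathcal H(b)},
\end{equation*}
where the continuous linear functional $f\mapsto \langle f,b\rangle_{\mathcal H(b)}$ has been moved inside the norm-convergent sum.

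The final step is to recognize that $X=S^*|_{\mathcal H(b)}$ and that $b\in\mathcal H(b)$, the latter guaranteed by Proposition~\ref{prop:qe-conditions}(vi) because $b$ is not quasi-extreme. Hence $X^nS^*b = X^{n+1}b$, and the preceding expansion becomes
\begin{equation*}
a(z) = a_0 - a_0 \sum_{m\geq 1} z^m \langle X^m b,\, b\rangle_{\mathcal H(b)}.
\end{equation*}
Matching coefficients with $a(z)=\sum_n \hat a(n) z^n$ yields $\hat a(n) = -a(0)\langle X^n b, b\rangle_{\mathcal H(b)}$ for $n\geq 1$, which is the claimed formula. I do not anticipate any substantive obstacle; the only bookkeeping points are confirming that in one variable the admissible tuple reduces to $b_1=S^*b$ (so that the colligation ${\bf V}$ takes the form above) and that the two uses of $a_0$ refer to the same scalar. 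All of the real work has already been carried out in Proposition~\ref{prop-a-function} and Lemma~\ref{a0-calculation}.
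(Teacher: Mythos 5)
The paper does not actually prove this proposition; it is stated as a result of Sarason and supported only by the citation \cite{Sar}. What you have written is essentially the calculation the paper performs \emph{after} stating Proposition~\ref{a-coefficients}, namely the Neumann series expansion of the transfer function of
\begin{equation*}
{\bf V} = \begin{bmatrix} X & S^*b \\ -a_0 b^* & a_0 \end{bmatrix},
\end{equation*}
and that calculation is correct as far as it goes. But read as a proof of the proposition it has a genuine logical gap. The $a$ in Proposition~\ref{a-coefficients} is, by the sentence introducing it, the classical \emph{outer} function determined by the boundary condition $|a(\zeta)|^2+|b(\zeta)|^2=1$ a.e. on $\mathbb{T}$ together with $a(0)>0$; its Taylor coefficients $\hat a(n)$ are those of this outer function. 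Your expansion, on the other hand, computes the Taylor coefficients of the multiplier produced by the colligation ${\bf V}$ in Proposition~\ref{prop-a-function}. Those are two a priori different functions. Equating them --- which is what your final ``matching coefficients'' step silently does --- presupposes exactly the conclusion that the subsection is trying to reach, and which the paper obtains by \emph{using} Proposition~\ref{a-coefficients} as an input: the paper's chain of reasoning is ``Sarason's formula $\Rightarrow$ the outer $a$ has this particular transfer-function realization $\Rightarrow$ the outer $a$ coincides with the constructed $a$.'' Your argument runs that chain backwards and therefore proves nothing new about the outer function.

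The same objection applies to your treatment of $|a(0)|^2 = 1/(1+\|b\|_b^2)$. Lemma~\ref{a0-calculation} evaluates the number $a_0$ attached to the canonical admissible tuple, whereas the proposition asserts something about the value at $0$ of the outer function. Those agree in the end, but again that agreement is a consequence of the proposition, not a shortcut to it. To repair the argument you would have to independently show that the transfer function of ${\bf V}$ has unimodular column boundary values a.e.\ (so that $|a|^2+|b|^2=1$ on $\mathbb{T}$) and is outer --- nontrivial facts that your write-up does not address. Alternatively one proves the formulas directly from the boundary definition of $a$, which is Sarason's route: the $n=0$ statement comes from the outer representation of $a$ together with the identity $\|b\|_b^2 = |a(0)|^{-2}-1$, and the $n\geq 1$ formulas come from his explicit computation of $\langle X^n b,b\rangle_b$ via Cauchy-transform/Toeplitz-operator manipulations. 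That is a genuinely different and harder argument than the transfer-function expansion, and is the one the paper is leaning on when it writes ``Sarason proves.''
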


Mutliplying by $z^n$ and
summing we get
\begin{align*}
  a(z) &= a(0) -a(0)\sum_{n=1}^\infty \langle X^n b,b\rangle_b z^n \\
&= a(0)-a(0)\langle (I-zX)^{-1} zXb, b\rangle_b
\end{align*}
Since $Xb=S^*b$, this shows that $a$ is a transfer function for the colligation
\begin{equation*}
  \begin{bmatrix}
X & S^*b \\ -a(0)b^*  &a(0) \end{bmatrix}
\end{equation*}
acting on $\mathcal H(b)  \oplus \mathbb C$.  Finally, since
\begin{equation*}
  a(0) = \sqrt{\frac{1}{1+\|b\|^2_b}} = a_0
\end{equation*}
(the first equality by Proposition~\ref{a-coefficients} and the second by
Lemma~\ref{a0-calculation}) this is precisely the transfer function
which is used to define $a$ in the proof of Proposition~\ref{prop-a-function}.

\section{Conclusion of the Proof of Theorem~\ref{thm-main}}\label{sec:forward}

In this section we prove the second half of Theorem~\ref{thm-main}:

\begin{prop}\label{minorant-implies-notqe} If $b$ is a multiplier of $H^2_d$ and there exists a nonzero multiplier $a$ such that
  \begin{equation*}
M_a^*M_a +M_b^*M_b\leq I
  \end{equation*}
then $b$ is not quasi-extreme.
\end{prop}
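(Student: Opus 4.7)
The plan is to produce, from the given nonzero $a$, an admissible Gleason tuple $(b_1,\dots,b_d)$ for $b$ satisfying the strict inequality $\sum_{j=1}^d\|b_j\|_b^2<1-|b(0)|^2$; by Proposition~\ref{prop:qe-conditions}(v) this rules out $b$ being quasi-extreme.

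The hypothesis $M_a^*M_a + M_b^*M_b\leq I$ says exactly that the column-valued symbol $F:=\binom{b}{a}:\B^d\to\C^2$ is a contractive $\C^2$-valued multiplier of $H^2_d$. By the Ball-Trent-Vinnikov realization theorem, $F$ admits a transfer function realization arising from an isometric colligation
\begin{equation*}
\mathbf{U}=\begin{bmatrix}A & B \\ C_1 & b(0) \\ C_2 & a(0)\end{bmatrix}:\mathcal{X}\oplus\C \longrightarrow \mathcal{X}^d\oplus\C^2
\end{equation*}
on some auxiliary Hilbert space $\mathcal{X}$; the block structure is exactly as in Section~\ref{sec:a-function}, but with $\C^2$ replacing $\C$ on the output side. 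Setting $b_j(z):=C_1(I-zA)^{-1}B_j$, a routine expansion of the transfer function yields $b(z)-b(0)=\sum_jz_jb_j(z)$, so the $b_j$ satisfy condition (i) of Proposition~\ref{prop:gleason}; standard observability arguments place $b_j$ in $\mathcal{H}(b)$.

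To estimate the norm-sum, I would use the Stein identity from the $(1,1)$-block of $\mathbf{U}^*\mathbf{U}=I$, namely $\sum_jA_j^*A_j + C_1^*C_1 + C_2^*C_2 = I$. The $C_2$-observability Gramian $G_a:=\sum_{n,\alpha}A_\alpha^*C_2^*C_2A_\alpha$, with $\alpha$ ranging over finite words in $\{1,\dots,d\}$, is a well-defined nonnegative operator on $\mathcal{X}$, and a standard transfer-function calculation yields
\begin{equation*}
\|b_j\|_b^2 \;\leq\; \|B_j\|^2_\mathcal{X}-\langle G_a B_j,B_j\rangle.
\end{equation*}
Combined with the $(2,2)$-block identity $\sum\|B_j\|^2_\mathcal{X}+|b(0)|^2+|a(0)|^2=1$, this produces
\begin{equation*}
\sum_{j=1}^d\|b_j\|_b^2 \;\leq\; 1-|b(0)|^2-\Bigl(|a(0)|^2+\sum_{j=1}^d\langle G_a B_j,B_j\rangle\Bigr).
\end{equation*}

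The main obstacle is verifying the bracketed quantity is strictly positive whenever $a\not\equiv 0$. When $a(0)\neq 0$ the strictness is immediate. When $a(0)=0$, the Gramian sum expands as $\sum_j\langle G_a B_j,B_j\rangle=\sum_{n,j,\alpha}\|C_2A_\alpha B_j\|^2$; were this zero, it would force $C_2(I-zA)^{-1}B_j\equiv 0$ for each $j$, whence $a(z)-a(0)=\sum_jz_jC_2(I-zA)^{-1}B_j\equiv 0$, so $a\equiv 0$, contradicting the hypothesis. Hence strict inequality holds in either case, and $b$ is not quasi-extreme by Proposition~\ref{prop:qe-conditions}(v).
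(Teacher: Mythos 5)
The overall strategy --- exhibit an admissible Gleason tuple for $b$ with strict defect, then invoke Proposition~\ref{prop:qe-conditions}(v) --- is the same as the paper's, and the use of the column $\binom{b}{a}$ is close in spirit to the paper's $2\times 2$ matrix $c=\begin{pmatrix}b&0\\a&0\end{pmatrix}$. However, there are two real gaps.

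First, the assertion that ``standard observability arguments place $b_j\in\mathcal H(b)$'' skips a genuine issue. The observability operator of your colligation maps $\mathcal X$ into the \emph{vector-valued} deBranges--Rovnyak space $\mathcal H\bigl(\binom{b}{a}\bigr)$, so what you know a priori is that the column $\binom{b_j}{a_j}$ lies in that space, not that $b_j$ lies in the scalar space $\mathcal H(b)$. Getting from the former to the latter requires the vector-valued RKHS membership criterion (Paulsen's theorem applied to the $(1,1)$ corner of the kernel inequality), which is precisely the device the paper uses; it is not automatic. This can be repaired by inserting the Paulsen argument, and it yields $\|b_j\|_b\leq\bigl\|\binom{b_j}{a_j}\bigr\|_{\mathcal H(\binom{b}{a})}\leq\|B_j\|_{\mathcal X}$.

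Second, and more seriously, the estimate
$\|b_j\|_b^2\leq\|B_j\|^2-\langle G_aB_j,B_j\rangle$
is neither proven nor, as far as I can see, a ``standard transfer-function calculation.'' The Paulsen argument delivers only $\|b_j\|_b\leq\|B_j\|$; it gives no mechanism for subtracting the $G_a$-quadratic form. Indeed $\langle G_aB_j,B_j\rangle=\sum_\alpha\|C_2A_\alpha B_j\|^2$ is the squared \emph{noncommutative Fock-space} norm of the symbol of $a_j$, which is a quantity incommensurate with the deBranges--Rovnyak norm accounting on the left-hand side; the space $\mathcal H\bigl(\binom{b}{a}\bigr)$ is not a direct sum of $\mathcal H(b)$ and $\mathcal H(a)$, so one cannot partition $\|B_j\|^2$ into a ``$b$-part'' dominating $\|b_j\|_b^2$ plus an ``$a$-part'' equal to $\langle G_aB_j,B_j\rangle$. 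Without this estimate, your argument establishes only $\sum_j\|b_j\|_b^2\leq 1-|b(0)|^2-|a(0)|^2$, which gives the needed strict inequality \emph{only when $a(0)\neq0$}. That is precisely the nontrivial case: the paper devotes a separate lemma to it, lifting the contractive column $\binom{b}{a}$ to the free Toeplitz algebra $\mathcal L_d$, composing the $a$-entry with a co-isometry $L_v^*$ for a minimal-length word $v$ with nonzero coefficient, and symmetrizing back to produce a new $\widetilde a$ with $\widetilde a(0)\neq0$ and $M_{\widetilde a}^*M_{\widetilde a}+M_b^*M_b\leq I$. Your $G_a$-argument is an interesting attempt to bypass that lemma, but as written the key inequality is unjustified, so the case $a(0)=0$ is not handled. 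If you instead invoke the paper's Lemma~5.1 to normalize to $a(0)\neq 0$, the remainder of your realization-based argument does go through and gives a mild variant of the paper's proof.
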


The proof requires an elementary-seeming lemma, which nonetheless
appears easiest to prove using the notion of a {\em free lifting} of a
multiplier. We review the relevant results, prove the lemma, and
finally prove Proposition~\ref{minorant-implies-notqe}.

We recall quickly the construction of the {\em free or non-commutative Toeplitz algebra} of Popescu.
This is a canonical example of a \emph{free semigroup algebra}
as described by Davidson and Pitts \cite{DavPit}, which contains proofs of
all the claims made here. Fix an alphabet of $d$ letters $\{1, \dots d\}$ and let $\mathbb
F_d^+$ denote the set of all words $w$ in these $d$ letters, including
the empty word $\varnothing$. The set $\mathbb
F_d^+$ is a saemigroup under concatenation: if $w=i_1, \dots i_n$ and
$v=j_1\dots j_m$, we define
\begin{equation*}
  wv=i_1\cdots i_nj_1\dots j_m.
\end{equation*}
Let $F^2_d$ denote the Hilbert space (called the {\em Fock space}) with orthonormal basis
$\{\xi_w\}_{w\in \mathbb F_d^+}$.  This space comes equipped with a
system of isometric operators $L_1, \dots L_d$ which act on basis
vectors $\xi_w$ by {\em left creation}:
\begin{equation*}
  L_i\xi_w = \xi_{iw}.
\end{equation*}
The operators $L_1, \dots L_d$ obey the relations
\begin{equation*}
  L_i^*L_j=\delta_{ji}I,
\end{equation*}
in other words they are isometric with orthogonal ranges. The {\em
  free semigroup algebra} $\mathcal L_d$ is the WOT-closed algebra of
bounded operators on $F^2_d$ generated by $L_1, \dots L_d$. Each
operator $F\in\mathcal L_d$ has Fourier-like expansion
\begin{equation}
  F\sim \sum_{w\in\mathbb F^+_d} f_wL^w
\end{equation}
where, for a word $w=i_1\cdots i_n$, by $L^w$ we mean the product
$L_{i_1}L_{i_2}\cdots L_{i_n}$. The coefficients $f_w$ are determined
by the relation
\begin{equation*}
  f_w =\langle F\xi_\varnothing, \xi_w\rangle_{F^2_d}
\end{equation*}
and the Cesaro means of the series converge WOT to $F$.  To each $F\in
\mathcal L_d$ we can associated a $d$-variable holomorphic function
$\lambda(F)$ as follows: to each word $w=i_1\cdots i_n$ let $z^w$ denote the product
\begin{equation*}
  z^w = z_{i_1}z_{i_2}\cdots z_{i_n}.
\end{equation*}
(Observe that $z^w=z^v$ precisely when $w$ is obtained by permuting
the letters of $v$). Then for $F\in \mathcal L_d$ we define
$\lambda(F)$ by the series
\begin{equation*}
  \lambda(F)(z)= \sum_{w\in\mathbb F^+_d} f_wz^w.
\end{equation*}
The series converges uniformly on compact subsets of $\mathbb B^d$,
and is always a multiplier of $H^2_d$. In fact, Davidson and Pitts
prove that the map $\lambda$ is completely contractive from $\mathcal
L_d$ to $\mathcal{M}(H^2_d)$. Conversely, if $f\in \mathcal{M}(H^2_d)$ and $\|f\|\leq
1$, then there exists (by commutant lifting) an $F\in \mathcal L_d$ (not necessarily unique)
such that $\|F\|\leq 1$ and $\lambda(F)=f$. We call such an $F$ a {\em
  free lifting} of $f$.   Free liftings also always exist for
matrix-valued multipliers, so in particular if, say,
\begin{equation*}
  \begin{pmatrix}f \\g\end{pmatrix}
\end{equation*}
is a contractive $2\times 1$ multiplier, then there exist
$F,G\in\mathcal L_d$ such that $\lambda(F)=f, \lambda(G)=g$, and
\begin{equation*}
  \begin{pmatrix} F\\G\end{pmatrix}
\end{equation*}
is contractive.

We will need the following lemma, which we prove using free liftings:
\begin{lem} If $b$ is a multiplier and there exists a nonzero multiplier $a$ satisfying $M_a^*M_a+M_b^*M_b\leq I$, then an $a$ can be chosen satisfying this inequality and such that $a(0)\neq 0$.
\end{lem}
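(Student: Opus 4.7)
The plan is to lift the problem to the noncommutative/free setting, perform an operator-algebraic ``shift'' that preserves the contractive column inequality, and then push back to the commutative side via $\lambda$. The inequality $M_a^*M_a+M_b^*M_b\le I$ says precisely that the $2\times 1$ column $\binom{b}{a}$ is a contractive multiplier of $H^2_d$ into $H^2_d\oplus H^2_d$, so by the free lifting theorem for matrix-valued multipliers reviewed above we may choose $B,A\in\mathcal L_d$ with $\lambda(B)=b$, $\lambda(A)=a$, and $B^*B+A^*A\le I$ on $F^2_d$. Since $a\ne 0$ we must have $A\ne 0$, so $A\xi_\varnothing\ne 0$ in $F^2_d$, and therefore some free Fourier coefficient $a_v:=\langle A\xi_\varnothing,\xi_v\rangle_{F^2_d}$ is nonzero for some word $v\in\mathbb F_d^+$.

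Fix such a $v$ and set $A':=(L^v)^*A$. The main step to check is that $A'$ still lies in $\mathcal L_d$. For a polynomial $\sum_w c_w L^w$ this is immediate from the relations $L_i^*L_j=\delta_{ij}I$: the operator $L_i^*$ simply erases an initial letter $i$, sending $\sum_w c_w L^w$ to $\sum_w c_{iw}L^w$, which is still a polynomial in the $L_j$'s. WOT-continuity of left multiplication by $L_i^*$ together with WOT-closedness of $\mathcal L_d$ extends this to all of $\mathcal L_d$, and iterating once per letter of $v$ gives $A'\in\mathcal L_d$ with $\|A'\|\le\|A\|$. Because $L^v(L^v)^*$ is an orthogonal projection on $F^2_d$,
\[
(A')^*A' \;=\; A^*L^v(L^v)^*A \;\le\; A^*A,
\]
and so $B^*B+(A')^*A'\le B^*B+A^*A\le I$, i.e.\ the free column $\binom{B}{A'}$ is still contractive in $M_{2,1}(\mathcal L_d)$.

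Applying the completely contractive map $\lambda$ to this column produces a contractive $2\times 1$ multiplier $\binom{b}{\tilde a}$ with $\tilde a:=\lambda(A')$, which translates back to the desired inequality $M_b^*M_b+M_{\tilde a}^*M_{\tilde a}\le I$. The value of $\tilde a$ at the origin is computed by pairing against $\xi_\varnothing$:
\[
\tilde a(0) \;=\; \langle A'\xi_\varnothing,\xi_\varnothing\rangle \;=\; \langle A\xi_\varnothing,L^v\xi_\varnothing\rangle \;=\; \langle A\xi_\varnothing,\xi_v\rangle \;=\; a_v \;\ne\; 0,
\]
so $\tilde a$ is the required replacement for $a$. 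The only nontrivial point I expect to spell out carefully is the stability of $\mathcal L_d$ under left multiplication by $L_i^*$; everything else is formal once that is in hand.
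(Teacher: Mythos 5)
Your overall strategy matches the paper's: lift $\binom{b}{a}$ to a contractive free column $\binom{B}{A}$ in $\mathcal{L}_d$, left-multiply $A$ by $(L^v)^*$ to produce a nonzero constant Fourier coefficient while preserving contractivity, then symmetrize. The way you establish contractivity of $\binom{B}{A'}$ is slightly different but correct: you observe $L^v(L^v)^*\le I$ and hence $(A')^*A'\le A^*A$, whereas the paper factors $\binom{B}{A'}=\begin{pmatrix}I & 0\\0 & L_v^*\end{pmatrix}\binom{B}{A}$; both are fine.

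However there is a genuine gap in your argument that $A'=(L^v)^*A\in\mathcal{L}_d$. Your claim that ``$L_i^*$ simply erases an initial letter $i$, sending $\sum_w c_wL^w$ to $\sum_w c_{iw}L^w$'' is false in general: one has
\begin{equation*}
L_i^*\Bigl(\sum_w c_wL^w\Bigr)=c_\varnothing L_i^*+\sum_{w}c_{iw}L^w,
\end{equation*}
and the term $c_\varnothing L_i^*$ lies outside $\mathcal{L}_d$ unless $c_\varnothing=0$. More generally $(L^v)^*L^w$ equals $L^{w'}$ when $w=vw'$, equals $(L^{v'})^*$ when $w$ is a \emph{proper prefix} of $v$ (write $v=wv'$), and is $0$ otherwise; the middle case produces forbidden adjoint terms. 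Because you pick an arbitrary $v$ with $a_v\neq 0$, these contributions need not vanish, and $A'$ need not lie in $\mathcal{L}_d$. The appeal to WOT-continuity of $T\mapsto L_i^*T$ does not rescue this, since that map does not send $\mathcal{L}_d$ into $\mathcal{L}_d$ (e.g.\ $L_i^*\cdot I=L_i^*\notin\mathcal{L}_d$). A concrete failure already occurs for $d=1$: if $a(z)=z+z^2$ and you take $v$ corresponding to $z^2$, then $(L^v)^*A=L^*+I\notin\mathcal{L}_1$. The paper avoids this precisely by choosing $v$ of \emph{minimal length} among words with nonzero coefficient (noting first that we may assume $a(0)=0$, so $a_\varnothing=0$); then every proper prefix $w$ of $v$ has $a_w=0$, the adjoint terms drop out, and $(L^v)^*A=\sum_{w'}a_{vw'}L^{w'}\in\mathcal{L}_d$. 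With that one word changed, the rest of your argument is sound.
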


\begin{proof}
 By the above remarks there exist free liftings $A, B$ of $a$ and $b$ to the free semigroup algebra $\mathcal L_d$ such that the column $\begin{pmatrix} B\\A\end{pmatrix}$ is contractive. The element $A$ has Fourier expansion
  \begin{equation*}
    A\sim\sum a_w L^w
  \end{equation*}
with $a_\varnothing=0$ (since $a(0)=0$). Choose a word $v$ of minimal length such that $c_v\neq 0$. It
follows that
\begin{equation*}
  \widetilde{A} = L_v^*A = \sum_w c_w L_v^*L_w =\sum_u \widetilde{c}_u L_u
\end{equation*}
is a contractive free multiplier, and
$\widetilde{A}(0):=\widetilde{c}_\varnothing = c_v\neq 0$, and we then
have that
\begin{equation*}
  \begin{pmatrix} B \\ \widetilde{A}\end{pmatrix} = \begin{pmatrix} I
    & 0 \\ 0 & L_v^*\end{pmatrix} \begin{pmatrix} B \\ A\end{pmatrix}
\end{equation*}
is contractive. Since the Davidson-Pitts symmetrization map $\lambda$ is
completely contractive, on putting
$\widetilde{a}=\lambda(\widetilde{A})$ we have $\widetilde{a}(0)\neq
0$ and
\begin{equation*}
  \begin{pmatrix} b \\ \widetilde{a}\end{pmatrix}
\end{equation*}
is a contractive $2\times 1$ multiplier, which proves the lemma.
\end{proof}
{\bf Remark:} This is really the same proof that works in the disk
(without the need for the free lifting step). In the disk we just get
that $\widetilde{a}$ satisfies  $a(z)=z^n\widetilde{a}(z)$ for some $n$, and hence
\begin{equation*}
  M_a^*M_a = M_{\widetilde{a}}^* M_{\widetilde{a}};
\end{equation*}
(since $M_z$ is an isometry).  More generally we could let $a=\theta F$ be the inner-outer factorization of $a$; since $M_\theta$ is isometric we would have
\begin{equation*}
  M_a^*M_a = M_F^* M_\theta^* M_\theta M_f =M_F^*M_F.
\end{equation*}

  \begin{proof}[Proof of Proposition~\ref{minorant-implies-notqe}]
    Suppose that $b$ is a contractive multiplier and there exists a
    nonzero multiplier $a$ so that
    \begin{equation*}
      M_a^*M_a+M_b^*M_b\leq I
    \end{equation*}
By the lemma we may assume that $a(0)\neq 0$. We will construct an admissible tuple $b_1, \dots b_d$ such
that
\begin{equation*}
  \sum_{j=1}^d \|b_j\|_b^2 \leq 1-|b(0)|^2-|a(0)|^2 < 1-|b(0)|^2;
\end{equation*}
by the remark following Proposition~\ref{prop:qe-conditions} this proves that $b$ is not quasi-extreme.

  Let
  \begin{equation*}
    c=\begin{pmatrix} b  & 0 \\ a & 0\end{pmatrix}.
  \end{equation*}
Then $c$ is a $2\times 2$ contractive multiplier, and
\begin{equation}
  c(0)^*c(0) = \begin{pmatrix} |b(0)|^2+|a(0)|^2 & 0 \\ 0 & 0 \end{pmatrix}.
\end{equation}

We form the deBranges-Rovnyak space $\mathscr H(c)$ of the function $c$,
which has reproducing kernel
\begin{align*}  k^c (z,w) & =  \frac{ I - c(z) c(w) ^* } {1-zw^*}  \\
& =  \begin{bmatrix} k^b (z,w) & \frac{-b(z)a(w) ^* }{1-zw^*} \\ \frac{-a(z) b(w) ^* }{1-zw^*} & k^a (z,w) \end{bmatrix}. \label{ckernel} \end{align*}

Now we apply the vector-valued generalization of a basic result from the theory of reproducing kernel Hilbert spaces:  let $H (k)$ be a $\mathcal{H}$-valued RKHS of functions on a set $X$.  An $\mathcal{H}$-valued function $F$ on $X$ belongs to $H (k) $ if and only if there is a $t \geq 0$ such that
$$ F(x) F(y) ^* \leq t^2 k (x,y), $$ as positive $\L (\mathcal{H} )$-valued kernel functions on $X$. Moreover
the least such $t$ that works is $t = \| F \| _{H (k)}$ \cite[Theorem 10.17]{Paulsen-rkhs}.

Note that in the above we view $F (x) : \C \rightarrow \mathcal{H}$ as a linear map for any fixed $x \in X$. It follows that $F(y)^* h = \ip{F(y)}{h} _\mathcal{H}$ for any $h \in \mathcal{H}$.
For example, if (as in the case of ${\mathscr H(c)}$) $\mathcal{H} = \C ^2$ then in the standard basis $ F(x) = \begin{bmatrix} F_1 (x) \\ F_2 (x) \end{bmatrix}$ and
$$ F(x)  F(y) ^* = \begin{bmatrix} F_1 (x) \ov{F_1 (y)} & F_1 (x) \ov{F_2 (y)} \\ F_2 (x) \ov{F_1 (y)} & F_2 (x) \ov{F_2 (y)} \end{bmatrix}. $$

So now let $C : \C ^2 \rightarrow K (c) \otimes \C^d $ be a contractive Gleason solution for $c$, e.g., the one appearing in a generalized functional model realization for $c$ (which exists by \cite{BBF}): that is, $C = \begin{bmatrix} c_1 \\ \vdots \\ c_d \end{bmatrix} $ obeys
$$ z C (z) = z_1 c_1 (z) + ... + z_d c_d (z) = c(z) - c(0), $$ and contractivity means:
$$ C ^* C \leq I - c(0) ^* c(0). $$ So each $c_j (z) \in \C ^{2\times 2}$ and we write
$$ c_j (z) = \begin{bmatrix} b_j (z) & * \\ a_j (z) & * \end{bmatrix}, $$ and observe that the $B = \begin{bmatrix} b_1 \\ \vdots \\ b_d \end{bmatrix}$,
and the $A = \begin{bmatrix} a_1 \\ \vdots \\ a_d \end{bmatrix}$ are Gleason solutions for $b , a $ in the sense that
$$ b(z) - b(0) = \sum_{j=1}^d z_j b_j (z), $$ and similarly for $a$. Note that
$$ c_j (z) e_1 = \begin{bmatrix} b_j (z) \\ a_j (z) \end{bmatrix}.$$  We need to check that $B$ actually belongs to ${\mathcal H(b)} \otimes \C ^d$ and is a contractive Gleason solution for $b$:
Let $\{ e_1 , e_2 \}$ denote the standard orthonormal basis of $\C ^2$ and let $t_j := \| c _k e_1 \| _{{\mathscr H(c)}}.$
Then by the vector-valued RKHS proposition discussed above, and the form of the reproducing kernel for ${\mathscr H(c)}$,
\begin{align*} (c_j (z) e_1)(c_j (w) e_1 ) ^* & =  \begin{bmatrix} b_j (z) \\ a_j (z) \end{bmatrix} \bbm b_j (w) ^* & a_j (w) ^* \ebm  \\
& =  \begin{bmatrix} b_j (z) b_j(w)^* & b_j (z) a_j (w)^* \\ a_j (z) b_j (w) ^* & a_j (z) a_j (w) ^* \end{bmatrix}  \\
& \leq   t_j ^2 \begin{bmatrix} k^b (z,w) & \frac{-b(z)a(w) ^* }{1-zw^*} \\ \frac{-a(z) b(w) ^* }{1-zw^*} & k^a (z,w) \end{bmatrix}, \end{align*}
as positive kernel functions. In particular the $(1,1)$ entry of the above equation must be a positive kernel function so that
$$ b_j (z) b_j(w)^* \leq t_j ^2 k^b (z,w). $$ Again, by the scalar version of the RKHS result this implies that $b_j \in {\mathcal H(b)}$ and that
$$ \| b_j \| _{{\mathcal H(b)}} \leq t_J = \| c_j e_1 \| _{{\mathscr H(c)}}. $$

This yields the inequalities
\begin{align*} \sum _{k=1} ^d \| b_j \| ^2 _{{\mathcal H(b)}} & \leq  \sum _{k=1} ^d t_j ^2  \\
& =  \sum _{k=1} ^d \| c_j e_1 \| ^2 _{{\mathscr H(c)}}  \\
& =  \sum _{k=1} ^d \ip{c_j ^* c_j e_1}{e_1} _{\C ^2 }  \\
& =  \ip{C^* C e_1}{e_1} _{\C ^2}  \\
& \leq  \ip{ (I - c(0) ^* c(0) ) e_1}{e_1} _{\C ^2} \\
& =  1 - |b(0) | ^2 - |a (0) | ^2, \\
& <   1 - |b (0) | ^2 ,  \end{align*} and the proof is complete.
\end{proof}

\bibliographystyle{plain}
\bibliography{QE}
\end{document}